\DeclareMathOperator{\rank}{rank}
\DeclareMathOperator*{\argmin}{\arg\!\min}
\DeclareMathOperator{\Ima}{Im}
\setlist[enumerate]{leftmargin=.5in}
\setlist[itemize]{leftmargin=.5in}
\crefname{hypothesis}{Hypothesis}{Hypotheses}
\title{Data-driven Discovery of Closure Models\thanks{Submitted to the editors 03/25/2018.
\funding{AFOSR grants FA9550-16-1-0309 \& FA9550017-1-0195}}}
\author{
Shaowu Pan\thanks{Department of Aerospace Engineering, University of Michigan, Ann Arbor, MI (\email{shawnpan@umich.edu}, \email{kdur@umich.edu} )}
\and
Karthik Duraisamy\footnotemark[2]
}
\begin{document}

\maketitle
\begin{abstract}
Derivation of reduced order representations of dynamical systems requires the modeling of the truncated dynamics on the retained dynamics. In its most general form, this so-called closure model has to account for memory effects. In this work, we present a framework of operator inference to extract the governing dynamics of closure from data in a compact, non-Markovian form. We employ sparse polynomial regression and artificial neural networks to extract the underlying operator. For a special class of non-linear systems, observability of the closure in terms of the resolved dynamics is analyzed and theoretical results are presented on the compactness of the memory. The proposed framework is evaluated on examples consisting of linear to nonlinear systems with and without chaotic dynamics, with an emphasis on predictive performance on unseen data.
\end{abstract}

\begin{keywords}
  data-driven closures, dynamical system closures, reduced order modeling, machine learning
\end{keywords}

\begin{AMS}
  70G60, 76F20
\end{AMS}

\section{Introduction}{\label{sec:introduction}}

Complex problems in science and engineering are typically characterized by high-dimensional dynamics. Examples include the modeling of turbulent fluid flows, molecular dynamics, and astrophysical plasmas. When such problems are viewed from a dynamical systems perspective, the high dimensionality of phase space is a consequence of the fact that important physical processes occur over a wide range of spatial and temporal scales. However, effective computational models of these systems for the purposes of analysis, design and control require accurate low-dimensional representations. Popular techniques to obtain low-dimensional representations include projection-based reduced order models~\cite{pod1,pod2,pod3,pod4},  reduced basis methods \cite{Veroy, Rozza}, proper generalized decomposition~\cite{chinesta2011short}, and Krylov subspace techniques~\cite{Beattie}. All of these techniques aim to capture the  dynamics essential to a quantity of interest in by solving for a small number of uknowns (usually by restricting the dynamics to a low-dimensional manifold). In most practical situations, however, the multiscale nature of the problem is such that a low-dimensional representation requires closure. In other words, the influence of the discarded degress of freedom on the retained unknowns becomes important and must be modeled.

The closure problem is well-recognized by the scientific computing community, and is typically addressed by invoking physical and/or mathematical arguments. A pertinent example of physics-based closure is Large Eddy Simulation~\cite{les} (LES) in fluid dynamics, where the impact of the unresolved scales on the resolved scales is often modeled via an eddy diffusivity hypothesis~\cite{germano}. Another example~\cite{kouznetsova2001approach} involves the determination of constitutive properties of complex materials through the detailed modeling of the microstructure. Approximate Green's function-based closures~\cite{vms}, adaptive deconvolution~\cite{stolz2001approximate}, and homogenization techniques~\cite{homo} are representative of mathematically-inspired closures. 

An alternate approach is to pursue data-driven techniques to address closure. There are several instances of the use of data in closure modeling and the following is not intended to be a complete or chronological review, but rather presents  representative references from the viewpoint of the various levels at which data has been used to aid closure modeling. Observational data has been used to calibrate closure parameters in reduced fidelity~\cite{oliver2011bayesian} and reduced order~\cite{couplet2005calibrated,benosman2017learning} models. In these approaches, the functional form of the closure term is prescribed (for instance, via an eddy viscosity assumption) with free parameters which are inferred by minimizing the misfit between the model output and training data. As an example of a more extensive approach,  Xie et al.~\cite{xie2017data} impose a general structure to the closure term and infer matrix operators within the structure. At the next higher level, Ibanez et al.~\cite{ibanez2018manifold} use manifold learning to identify  locally-linear embeddings and construct constitutive relationships for elasticity. Parish et al.~\cite{parish2016paradigm}, Singh et al.~\cite{singh2017machine} directly extract the functional form of augmentations to the closure term by combining statistical inference and learning. 

The goal of this work is to extract closure operators for reduced-dimensional dynamical systems using data snapshots generated from the original high-dimensional dynamical system. The low-dimensional state is augmented with a new set of variables, which represent the closure term, and the evolution equation for the dynamics is discovered in terms of the low-dimensional state using polynomial regression and neural networks. A key difference compared to the literature cited above is that the closed lower-dimensional system is capable of emulating non-Markovian characteristics. Further, the functional form of the evolution equation of the closure is not imposed, but rather extracted directly from the data. 

Over the past few years, much research has been dedicated to the topic of ``data-driven discovery of governing equations,'' using techniques such as dynamic mode decomposition~\cite{dmd}, feature-space regression~\cite{edmd,sindy}, operator inference~\cite{opinf}, and neural networks~\cite{raissi2017physics}, etc. These works have demonstrated that it is possible to a) {\em rediscover} known equations from data, or b) derive approximate representations of systems for which precise equations cannot be written (such as the spread of epidemics~\cite{mangan2016inferring}). The scope of the present work is different, as the structure of the closure is unknown even for simple non-linear dynamical systems. It is, however, assumed that the full-order model corresponding to the high-dimensional system is known (as is the case in many physical problems, such as fluid dynamics where the governing equations are known, but are prohibitively expensive to solve in high-dimensional form) and this knowledge is incorporated into the model formulation process. Furthermore, emphasis is on prediction rather than reconstruction.

This paper is structured as follows: In \cref{sec:problem_description}, the closure problem is briefly described. In \cref{sec:framework}, a framework of operator inference is presented. In \cref{sec:linear} and \cref{sec:non-linear}, applications of this framework with sparse polynomial regression and artificial neural network (ANN) are presented on various problems ranging from simple linear systems to a nonlinear PDE system. Theoretical investigations are conducted on the structure of the closure dynamics in \cref{sec:theory}. Conclusions and perspectives are drawn in \cref{sec:conclusion}.

\section{Description of the closure problem}\label{sec:problem_description}

Consider the autonomous nonlinear dynamical system in \cref{{eq:general_fom}}
\begin{equation}{\label{eq:general_fom}}
\frac{d \bm x}{dt} = \bm F(\bm x),
\end{equation} 
where 
$\bm x(t) \in \mathbb{R}^N $, $N \in \mathbb{N}^+$,
$ t \in [ 0, +\infty ) $, 
$\bm{x}(0) = \bm{x}_0$ and
$\bm{F}(\cdot): \mathbb{R}^{N} \mapsto \mathbb{R}^{N}$.

To serve as a representative lower-dimensional dynamical system, we consider a partition 
\begin{equation}\label{eq:general_fom_partition}
\bm{x} = 
\begin{bmatrix}
\textcolor{black}{\bm{\hat x}} \\ 
\textcolor{black}{\bm{\tilde x}}
\end{bmatrix},  
\bm F(\bm x) = 
\begin{bmatrix}
\bm{\hat{F}}(\textcolor{black}{\bm{\hat x}} , \textcolor{black}{\bm{\tilde x}}) \\ 
\bm{\tilde{F}}(\textcolor{black}{\bm{\hat x}},  \textcolor{black}{\bm{\tilde x}}) 
\end{bmatrix},
\end{equation}
where $\bm{\hat{F}}(\cdot,\cdot): \mathbb{R}^{Q} \times \mathbb{R}^{N-Q} \mapsto \mathbb{R}^{Q}$, $\bm{\tilde{F}}(\cdot,\cdot): \mathbb{R}^{Q} \times \mathbb{R}^{N-Q} \mapsto \mathbb{R}^{N-Q}$, $Q \in \mathbb{N}^{+}$. $\textcolor{black}{\bm{\hat x}} \in \mathbb{R}^Q$ is the low-dimensional or resolved state and $\textcolor{black}{\bm{\tilde x}} \in \mathbb{R}^{N-Q}$ represents the unresolved modes. In general terms, the above partition appears arbitrary. This partition is, however, directly relevant in a number of problems: (i) in projection-based Reduced Order Models (ROMs), where the components of the state in the original dynamical system are ordered according to an energy metric; (ii) in large eddy simulations (LES) of turbulence using spectral or finite element methods, where there is a clear separation between resolved and unresolved scales; (iii) in system identification, where the system is only partially observed and a governing equation for the partially observed system is desired. 

The evolution of the reduced state is given by
\begin{equation}{\label{eq:non-useful}}
\frac{d \bm{\hat{x}}}{dt} =  \bm{\hat{F}}(\textcolor{black}{\bm{\hat x}},\textcolor{black}{\bm{\tilde x}}),
\end{equation} where $\bm{\hat x}(0) = \bm{\hat x}_0 \in \mathbb{R}^{Q}$ and $\bm{\tilde x}(0) = \bm{\tilde x}_0 \in \mathbb{R}^{N-Q}$.

Note that \cref{eq:non-useful} is not very useful, as the trajectory of the unresolved state $\bm{\tilde{x}}(t)$ is present in these equations. In reduced order modeling, a closed set of equations of the form \begin{equation}{\label{eq:ROM}}
\frac{d \textcolor{black}{\bm{\hat{x}}}}{dt} = \bm{F}_{ROM}( \textcolor{black}{\bm{\hat{x}}}),
\end{equation} is obtained through physically-insipired~\cite{couplet2005calibrated,benosman2017learning}, data-augmented~\cite{xu2017reduced} or purely data-driven~\cite{opinf} methods.

In~\ref{eq:ROM}, $\bm{\hat x}(0) = \bm{\hat x}_0$ and $\bm{F}_{ROM}: \mathbb{R}^{Q} \mapsto \mathbb{R}^{Q}$.
When $\bm{F}_{ROM}(\textcolor{black}{\bm{\hat{x}}}) = \bm{\hat{F}}(\textcolor{black}{\bm{\hat x}} , \bm{0})$, one obtains a classic truncated ROM. In fluid dynamic modeling terms, this corresponds to a Large Eddy Simulation without a explicit subgrid scale model. In obtaining such approximations, a key fact to consider is that, even if the high-dimensional system is Markovian, the corresponding projected low-dimensional system can be non-Markovian. This is evident even in the simplest case of a linear system. Consider that the full order model with its partition into resolved and unresolved states:

\begin{equation}{\label{eq:general_fom_linear}}
\dfrac{d}{dt}
\begin{bmatrix}
\textcolor{black}{\bm{\hat x}}        \\
\textcolor{black}{\bm{\tilde x}}
\end{bmatrix}
=
\begin{bmatrix}
A_{11} & A_{12}\\
A_{21} & A_{22}
\end{bmatrix}
\begin{bmatrix}
\textcolor{black}{\bm{\hat x}}         \\
\textcolor{black}{\bm{\tilde x}}
\end{bmatrix}.
\end{equation}

The evolution of the resolved states is given by:

\begin{equation}{\label{eq:linear_gle}}
\frac{d \textcolor{black}{\bm{\hat x}}}{dt}
=
A_{11}\bm{\hat x} +
\textcolor{black}{\int_{0}^{t} A_{12} e^{A_{22}\tau } A_{21} \bm{\hat x}(t-\tau) d\tau} + A_{12} e^{A_{22}t}\textcolor{black}{\bm{\tilde{x}}}(0).
\end{equation}

This equation is closed in the resolved variables. The first term on the right hand side is $\bm{\hat{F}}(\textcolor{black}{\bm{\hat x}},\bm{0})$; the second term, which represents the closure,  involves the time-history of the resolved modes, and the third term involves the initial condition of the unresolved state, and can be expected to decay in time for a dissipative system. 

The above expression of closure can be generalized to any nonlinear system~\cref{eq:general_fom,eq:general_fom_partition} using the Mori-Zwanzig formalism~\cite{chorin2002optimal,chorin2014estimating},  and exact evolution equations can be written for the reduced state in the following Generalized Langevin form:
\begin{equation}{\label{eq:general_MZ}}
\dfrac{d \textcolor{black}{\bm{\hat x}}}{dt} = \bm{\hat{F}}(\textcolor{black}{\bm{\hat x}},\bm{0}) + \ \int_0^t \mathcal{K}(\bm{\hat{x}}(s), t - s) \ ds \ +  \mathcal{Q}(\bm{\hat{x}}(t)),
\end{equation} 
where $\mathcal{K}$, $\mathcal{Q}$ are complex operators associated with convolution and influence of the initial conditions, respectively.


While the above equation is mathematically precise and formally closed in the resolved variables, the functional forms of $\mathcal{K}$ and $\mathcal{Q}$ are not explicitly known and numerically intractable, even for the simplest non-linear dynamical systems, and must thus be determined via the solution of another high-dimensional partial differential equation~\cite{gouasmi2017priori}. The key message is that reduced-order representations of even a linear Markovian system can introduce memory or time-history effects in an explicit form that requires \emph{all} its previous states. In the present work, we introduce a dynamic memory and aim to extract its evolution using operator inference. It is also shown that for a specialized
class of nonlinear systems that the memory length is compact, and thus the full history of resolved states is not necessary.

\section{Framework of operator inference}{\label{sec:framework}}

As indicated by the Mori-Zwanzig formalism, the exact closure is a compositional convolution operator on all past resolved states. This approach is equivalent to the concept of dynamic or recurrent memory~\cite{goodfellow2016deep}, a concept which has been very attractive in the time series modeling and deep learning communities. To address complex memory structures, we consider time delay vectors in the  framework as implied by Takens embedding theorem~\cite{takens1981detecting} which states that there exists a diffeomorphism between proper time delayed (reconstructed) attractor and the original manifold. 

By leveraging both dynamic memory and implications of Takens embedding theorem, a framework of operator inference is proposed as shown in the following discretized augmented system:

\begin{align}{\label{eq:general_model_1}}
\frac{D \bm{\hat{x}}}{Dt} &= \bm{\hat{F}}(\bm{\hat{x}}(t),\bm{0}) + \textcolor{black}{\bm{\delta}(t)}, \\
{\label{eq:general_model_2}}
\frac{D \textcolor{black}{\bm{\delta}}}{Dt} &=  \bm{G}(\bm{\hat{x}}(t - s_0), 
\ldots,
\bm{\hat{x}}(t-s_p),
\textcolor{black}{\bm{\delta}(t - s_0)},,
\ldots,
\textcolor{black}{\bm{\delta}(t-s_p)}),
\end{align} where $\bm{\delta} \in \mathbb{R}^{Q}$ is the closure term, $p \in \mathbb{N}$ is the number of delays of past time information,  and $\frac{D}{Dt}$ represents time discretization. $\{s_i\}_{i=0}^{p}$ is given as a strictly monotonic equally spaced time sequence with $s_i = i\Delta t$, $s_i \in [0,t]$.

We note that Shulkind et al.~\cite{shulkind2017experimental} also pursue closure, but are focused on 
developing a Markovian correction term with the restriction that the magnitude of the closure term is small compared to the resolved term. In the present work,
memory effects are represented via an additional governing equation for $\bm \delta$. Two important features of the current framework are: a) the functional form of $\bm{{G}}$ is extracted from data, i.e., solution snapshots from the high-dimensional model, and b) this framework is inherently non-Markovian (for the resolved variables $\bm{\hat{x}})$. As a side note in~\cref{app:compare_elman}, the current framework of operator inference for the shortest memory case $p=0$ is compared to Elman's network~\cite{elman1990finding}, a forerunner to modern recurrent neural networks~\cite{goodfellow2016deep}, to highlight similarities and differences.

\subsection{Interpretation of operator inference}

In a simple setting, assume the dynamical system above is discretized using first-order forward time integration.  Rewrite the partitioned system as

\begin{align}
\frac{\bm{\hat{x}}^{n+1}-\bm{\hat{x}}^{n}}{\Delta t} &= \bm{\hat{F}}(\bm{\hat{x}}^{n},\bm{0}) + \bm{\hat{F}}(\bm{\hat{x}}^{n},\bm{\tilde{x}}^{n}) - \bm{\hat{F}}(\bm{\hat{x}}^{n},\bm{0}) = \bm{\hat{F}}(\bm{\hat{x}}^{n},\bm{0}) + \bm{\delta}^{n}, \\
\frac{\bm{\tilde{x}}^{n+1}-\bm{\tilde{x}}^{n}}{\Delta t} &= \bm{\tilde{F}}(\bm{\hat{x}}^{n}, \bm{\tilde{x}}^{n}),
\end{align} 
where 
$\bm{\delta} = \bm{\hat{F}}(\bm{\hat{x}}, \bm{\tilde{x}}) - \bm{\hat{F}}(\bm{\hat{x}}, \bm{0}) \triangleq \bm{R}(\bm{\hat{x}}, \bm{\tilde{x}})$. Note that $\bm{\delta}^{n+1} = \bm{\hat{F}}(\bm{\hat{x}}^{n+1}, \bm{\tilde{x}}^{n+1}) - \bm{\hat{F}}(\bm{\hat{x}}^{n+1}, \bm{0}) = \bm{R}(\bm{\hat{x}}^{n+1}, \bm{\tilde{x}}^{n+1}) = \bm{R}(\bm{\hat{x}}^{n} + \Delta t \bm{\hat{F}}(\bm{\hat{x}}^{n}, \bm{0}) + \Delta t \bm{\delta}^{n}, \bm{\tilde{x}}^{n} + \Delta t \bm{\tilde{F}}(\bm{\hat{x}}^{n}, \bm{\tilde{x}}^{n}))$. Thus, one must obtain $\bm{\tilde{x}}^{n}$ to further evolve the closure.

As implied by the Takens embedding theorem, it is possible to use the information of past resolved states to obtain $\bm{\tilde{x}}^{n}$. Considering a time delay up to $p$ steps, the equations that involve $\bm{\tilde{x}}$ are given as follows:
\begin{align}
\bm{\delta}^{n} &= \bm{R}(\bm{\hat{x}}^{n}, \bm{\tilde{x}}^{n}),\\
\frac{\bm{\tilde{x}}^{n}-\bm{\tilde{x}}^{n-1}}{\Delta t} &= \bm{\tilde{F}}(\bm{\hat{x}}^{n-1},\bm{\tilde{x}}^{n-1}), \\
\bm{\delta}^{n-1} &= \bm{R}(\bm{\hat{x}}^{n-1}, \bm{\tilde{x}}^{n-1}), \\
\vdots \nonumber
\\
\frac{\bm{\tilde{x}}^{n-p+1}-\bm{\tilde{x}}^{n-p}}{\Delta t} &= \bm{\tilde{F}}(\bm{\hat{x}}^{n-p},\bm{\tilde{x}}^{n-p}),\\
\bm{\delta}^{n-p} &= \bm{R}(\bm{\hat{x}}^{n-p}, \bm{\tilde{x}}^{n-p}) ,
\end{align} 
with the number of equations, component-wise, being $N_{eq} = pN+Q$ and the number of unknowns, component-wise, being $N_{unk} = (p+1)(N-Q)$. Note that $N_{eq} - N_{unk} = (p+2)Q - N$. Therefore, for large enough $p$, it should be possible to determine $\bm{\tilde{x}}^{n}$ from $\bm{\hat{x}}^{n-1}, \ldots, \bm{\hat{x}}^{n-p}$ and $\bm{\delta}^{n}, \ldots, \bm{\delta}^{n-p}$ by solving the nonlinear algebraic equations above. Once $\bm{\tilde{x}}^{n}$ is determined, one can obtain $\frac{\bm{\delta}^{n+1} - \bm{\delta}^{n}}{\Delta t}$ as a function $\bm{G}$ of $\bm{\hat{x}}^{n}, \ldots, \bm{\hat{x}}^{n-p}$ and $\bm{\delta}^{n}, \ldots, \bm{\delta}^{n-p}$. This suggests the possibility of finding $\bm{G}$ through a data-driven method.

\subsection{Definition and data preparation}

As discussed above, the goal of the operator inference framework is to determine $\bm{{G}}$ in \cref{eq:general_model_2}. This process can also be viewed as a nonlinear system identification problem by considering $\bm{\delta}$ as the states of an undetermined system and $\bm{\hat{x}}$ as inputs to this system. Our approach requires the parameterization of $\bm{G}(\cdot)$ in the form of $\bm{G}_{\bm{W}}$ using two different methodologies and then solving an optimization problem over the parameter space ${\bm{W}}$. The first parametrization method is sparse polynomial regression (similar to the SINDy approach of Brunton et al.~\cite{sindy}) which leverages the fact that many dynamical systems in science and engineering can be represented as a sparse combination of monomials. The second method uses time delayed neural networks~\cite{waibel1990phoneme} which are scalable to high-dimensional nonlinear systems and possesses the universal approximator property and implicit feature selection. Note that for simplicity, the time derivative is realized by a first-order forward scheme throughout this work.

Assume $M$ temporally sequential snapshots $\bm{X} = \begin{bmatrix}\bm{\hat{x}}^j \end{bmatrix}_{j \in I} \in \mathbb{R}^{M \times Q}$ spaced uniformly with a time interval $\Delta t$, i.e., $s_i = i\Delta t$, $\forall i \in \{0, \ldots, p\}$ and $\bm{dX} = \begin{bmatrix}D\bm{\hat{x}}^j/Dt\end{bmatrix}_{j \in I} \in \mathbb{R}^{M \times Q}$ obtained from the full order model (Eq.\cref{{eq:general_fom}}).  Here, $I = \{j|j\in \mathbb{N}^{+}, 1\le j \le M\}$, $M \in \mathbb{N}^{+}$ and $p \in \mathbb{N}$ is the number of steps of past memory. We divide $\bm{X}$ into training and testing data through the index set, considering $p$ time delayes: $I^{p} = \{ j | j \in I, \forall i \in \mathbb{N}, 0 \le i \le p, j-i \in I \}$; training data index set: $I^{p}_{train} = \{j |   j\in I^p ,j\le M_{train} \} $, $M_{train} \in \mathbb{N}^{+}$; testing data index set: $I^{p}_{test} = I^{p} \setminus I^{p}_{train}$. It should be noted that we simply choose $s_i = i\Delta t$, where $\Delta t$ is the time interval between equally spaced snapshots. As a result, given $\bm{\hat{F}}(\cdot,0)$, the corresponding snapshots of training closure are
\begin{equation}{\label{eq:data_closure}}
\bm{\Delta}_{I^{p}_{train}}= \begin{bmatrix}
\bm{\delta}^j
\end{bmatrix}_{j \in I^{p}_{train}} = 
\begin{bmatrix}
D\bm{\hat{x}}^j/Dt - \bm{\hat{F}}(\bm{\hat{x}}^j,\bm{0})
\end{bmatrix}_{j \in I^{p}_{train}}.
\end{equation}

Therefore, the time-delayed feature matrix of $\bm{\hat{x}}$ and $\bm{\delta}$ in the training data can be constructed as
\begin{equation}{\label{eq:feature_train}}
\bm{Y}_{I^{p}_{train}} = \begin{bmatrix}
\bm{\hat{x}}^j, \ldots ,\bm{\hat{x}}^{j-p}, \bm{\delta}^j,\ldots, \bm{\delta}^{j-p}
\end{bmatrix}_{j \in I^p_{train}}  = \begin{bmatrix} \bm{y}^{j}  \end{bmatrix}_{j \in I^p_{train}},
\end{equation}
where $\bm{y}^{j} \in \mathbb{R}^{2(1+p)Q}$.

Considering the dependency indicated by the relation between the sequences of $\bm{\delta}$ and $\bm{\hat{x}}$, $\forall j \in \mathbb{N}^{+}$, $j \le n$, $\bm{\delta}^{n-j} = \frac{\bm{\hat{x}}^{n-j+1}-\bm{\hat{x}}^{n-j}}{\Delta t} - \bm{\hat{F}}({\bm{\hat{x}}^{n-j},\bm{0}})$; the economic time-delayed feature matrix can be constructed as
\begin{equation}{\label{eq:feature_train_eco}}
\bm{Y}^{eco}_{I^{p}_{train}} = \begin{bmatrix}
\bm{\hat{x}}^j, \ldots ,\bm{\hat{x}}^{j-p}, \bm{\delta}^j
\end{bmatrix}_{j \in I^p_{train}}  = \begin{bmatrix} \bm{y}^{j}_{eco}  \end{bmatrix}_{j \in I^p_{train}},
\end{equation}
where $\bm{y}^{j}_{eco} \in \mathbb{R}^{(2+p)Q}$.

The training target is
\begin{equation}{\label{eq:snapshots_dddt_train}}
\bm{Z}_{I^{p}_{train}} = {D\bm{\Delta}_{I^{p}_{train}}}/{Dt}  = 
[D\bm{\delta}^j/Dt ]_{j \in I^p_{train}} = [\bm{z}^{j} ]_{j \in I^p_{train}},
\end{equation}
where $\bm{z}^{j} \in \mathbb{R}^{Q}$.

Likewise, the testing feature matrix and target are
\begin{equation}{\label{eq:feature_test}}
\bm{Y}_{I^p_{test}} = \begin{bmatrix}
\bm{\hat{x}}^j, \ldots ,\bm{\hat{x}}^{j-p}, \bm{\delta}^j,\ldots, \bm{\delta}^{j-p}
\end{bmatrix}_{j \in I^p_{test}} = \begin{bmatrix} \bm{y}^{j}  \end{bmatrix}_{j \in I^p_{test}},
\end{equation} 
\begin{equation}{\label{eq:snapshots_dddt_test}}
\bm{Z}_{I^p_{test}} = \frac{d\bm{\Delta}}{dt}   = [\bm{z}^{j} ]_{j \in I^p_{test}},
\end{equation}
and the corresponding economic feature matrix is
\begin{equation}{\label{eq:feature_test_eco}}
\bm{Y}^{eco}_{I^p_{test}} = \begin{bmatrix}
\bm{\hat{x}}^j, \ldots ,\bm{\hat{x}}^{j-p}, \bm{\delta}^j
\end{bmatrix}_{j \in I^p_{test}} = \begin{bmatrix} \bm{y}^{j}_{eco}  \end{bmatrix}_{j \in I^p_{test}}.
\end{equation}

\subsection{Data driven modeling}

Based on the above definitions, the general idea is to transform the functional approximation problem into an optimization problem either through sparse polynomial regression (SINDy) or neural networks as described in the following subsections.


\subsubsection{Sparse polynomial model}

Since polynomial features frequently appear in many science and engineering applications, polynomial regression~\cite{sindy} is typically a popular choice. An approximation of the form $\bm{G} = \bm{\hat{G}}_{\bm{W}} = \bm{\Theta}_{k} \bm{W}$ is employed by transforming the problem of finding a sparse representation of dynamical system into a convex optimization problem in \cref{eq:lasso}, where $\bm{W} \in \mathbb{R}^{L^p_k \times Q}$ is a matrix of decision variables, and $L^p_k \in \mathbb{N}^{+}$ is the number of component-wise polynomial features up to total degree $k$ of resolved states and using previous $p$ steps. To illustrate this idea, given row vector $\bm{h} \in \mathbb{R}^{1 \times n}$, $\forall n\in \mathbb{N}^{+}$, $\bm{\Theta}_{k}$  is a corresponding feature row vector from a monomial library with a certain maximum total order $k$
\begin{equation}{\label{eq:theta_feature}}
\bm{\Theta}_k(\bm{h}) = \begin{bmatrix}
1 \quad \bm{h} \quad \bm{h}^{P_2} \cdots \bm{h}^{P_k} 
\end{bmatrix}, 
\end{equation}
where $\bm{h}^{P_k}$ refers to all product terms of monomials with total degree $k$. Naturally, $\forall m, n \in \mathbb{N}^{+}$, $\bm{H} \in \mathbb{R}^{m \times n}$, $\bm{\Theta}_k(\bm{H})$ is the row stack of $\bm{\Theta}_k(\bm{H}_i)$, $ i \in \{ 1,\ldots,m \}$, $\bm{H} = [\bm{H}^{T}_1,\ldots,\bm{H}^{T}_m]^{T}$. 

The basic idea of sparse polynomial regression is to find a sparse representation by sparsifying the coefficient matrix $\bm{W}$ from a predefined feature library through either a sequential thresholded least-squares algorithm~\cite{sindy} or using \emph{lasso}~\cite{mangan2016inferring}. 

Applying \emph{lasso}~\cite{tibshirani1996regression,tibshirani2013lasso}, given $|I^{p}_{train}| \ge (1+p)L^p_k$, \cref{eq:lasso} can be shown to be a convex optimization problem for $\bm{w}_{k}$, the $k$th column of $\bm{W}$:
\begin{equation}{\label{eq:lasso}}
\bm{w}_{k}^{*} = \argmin_{\bm{w}_k}\frac{1}{\left\lvert I^p_{train} \right\rvert} \sum_{j \in I^p_{train}}{ \left\lVert \bm{z}^j - \bm{\Theta}_{k}(\bm{y}^{j}) \bm{w}_k \right\rVert_2^2} + \lambda \left\lVert \bm{w}_k \right\rVert_1,
\end{equation} 
where $\vert \cdot \vert$ is the cardinality, $\bm{\Theta}_k(\bm{y}^j) \in \mathbb{R}^{|I^p_{train}| \times L^p_k}$, and $\lambda \in \mathbb{R}^{+}$ is the penalty coefficient. It is important to note that for \emph{lasso} to achieve ideal support recovery, the following constraint must be satisfied \cite{su2017false}
\begin{equation}
n_k/n_p \le \delta/(2 \log n_p) (1+o(1)),
\end{equation}
where $n_k$ is the number of true features, $n_p$ is the number of total features, $\delta = n/n_p$ where $n$ is the number of i.i.d data points. The simplest way to achieve this is to ensure one has a large number data $n$ compared to number of total features $n_p$. \emph{lasso} is implemented via Scikit-learn packages \cite{scikit-learn}.

Regarding the dependency between features, since $j \le n$, $\bm{\delta}^{n-j} = \frac{\bm{\hat{x}}^{n-j+1}-\bm{\hat{x}}^{n-j}}{\Delta t} - \bm{\hat{F}}({\bm{\hat{x}}^{n-j},\bm{0}})$. Hence, if $\bm{\hat{F}}$ is compact in monomials, it is possible to replace $\bm{\delta}^{n-1},\ldots,\bm{\delta}^{n-p}$ by polynomials of $\bm{\hat{x}}^{n},\ldots, \bm{\hat{x}}^{n-p}$. However, if $\bm{\hat{F}}$ is not in polynomial form or if $\bm{\hat{F}}$ contains a very high order polynomial of $\bm{\hat{x}}$, the size of the library will be extremely large and potentially non-convergent. While reuse of $\bm{\delta}^{n-j}$ can alleviate this issue, the trade-off involves using twice the degrees of freedom. For the sparse polynomial regression model, employment of $\bm{\delta}^{n-j}$ is considered throughout this work for better fitting performance.



The current framework with polynomial features is different from the operator inference of Peherstorfer and Willcox~\cite{opinf}, which targets  the entire system and can be viewed as a particular case of the present work with $p=0$, $k=2$, and $\bm{\hat{F}} = 0$, and with POD as preprocessing for dimension reduction. Additionally, sparsity is not encouraged and no memory effects are required in their model. The present framework thus seeks a more general non-Markovian operator inference.

\subsubsection{Artificial neural network model}

In the previous subsection, the number of polynomial features in the feature library is found to grow exponentially with $Q$. One of the most popular tools for efficient high-dimensional functional approximations is the artificial neural network (ANN). The appealing feature of neural networks with a single hidden layer and squashing nonlinear activation function is that any Borel measurable function can be approximated to any degree of accuracy on a compact domain. This property is guaranteed by the universal approximation theory~\cite{Hornik1989}. 
ANN has attracted considerable attention in recent years. The success of deep learning (na{\"i}vely and narrowly speaking for supervised learning, ANN with a large number of hidden layers) lies in learning low-dimensional representations from high-dimensional, complex data effectively and building relationships between learned features and the target~\cite{goodfellow2016deep}.


To parametrize the model described in \cref{{eq:general_model_time_parallel}}, the standard feedforward neural network structure shown in \cref{fig:fnn_plain} is employed for $\bm{G} = \bm{\hat{G}}_{\bm{\theta}, \bm{b}}$. Due to the previously mentioned dependency between sequences of $\bm{\delta}$ and $\bm{\hat{x}}$ and the universal approximator property of ANN, $\bm{y}_{eco} \in \mathbb{R}^{(2+p)Q}$ is used as input. To construct a densely connected feedforward neural network $\bm{\hat{G}}_{\bm{\theta}, \bm{b}}$: $\mathbb{R}^{(2+p)Q} \mapsto \mathbb{R}^Q$ with $L-1$ hidden layers and a linear output layer, the following recursive expression is used for each hidden layer:
\begin{equation}{\label{eq:recursive}}
\bm \eta ^l = \sigma_l (\theta_l \bm \eta^{l-1} + b_l),
\end{equation}
for $l=1,\dots,L-1$, where $\bm{\eta}^{0}$ stands for the input of the neural network, $\bm \eta^l \in \mathbb{R}^{n_l \times 1}$, $n_l \in \mathbb{N^{+}}$ is the number of units in layer $l$, $\theta_l \in \mathbb{R}^{n_{l} \times  n_{l-1}}$, $b_l \in \mathbb{R}^{n_l \times 1}$, $n_0 = (2+p)Q$, and $\sigma_l$ is the activation function of layer $l$. 
Note that the output layer is linear, i.e., $\sigma_L(x)=x$:
\begin{equation}
 \bm{\hat{G}}(\bm{y}_{eco}; \bm{\theta}, \bm b) = \bm \eta^L = \theta_L \bm \eta^{L-1} + b_L,
 \end{equation}
where $\theta_L \in \mathbb{R}^{n_{L} \times n_{L-1}}$, $b_l \in \mathbb{R}^{ n_L \times 1}$, and $n_L = Q$. Parameters of the neural network are summarized as $\bm{W} = \{ \bm{\theta}, \bm{b}  \}$: weights $\bm{\theta} = \{ \theta_j \}_{j=1,\dots,L}$ and biases $\bm b = \{ b_j \}_{j=1,\dots,L}$.
In this work we use two hidden layers where $L=3$ and hidden units are all the same. The full expression of the neural network model is
\begin{equation}{\label{eq:full_basic_model}}
\bm{\hat{G}}(\bm{y}_{eco};\bm W)= \bm{\hat{G}}(\bm{y}_{eco};\bm{\theta}, \bm{b}) = \theta_3\sigma(\theta_2 \sigma(\theta_1 \bm{y}_{eco} + b_1 ) + b_2) + b_3,
\end{equation}
where $\sigma(\cdot)$: $\mathbb{R} \mapsto \mathbb{R}$ is a nonlinear activation function, e.g., ReLU, SeLU, tanh \cite{tensorflow}. 
\begin{figure}[H]
\centering 
\includegraphics[width=0.45\textwidth]{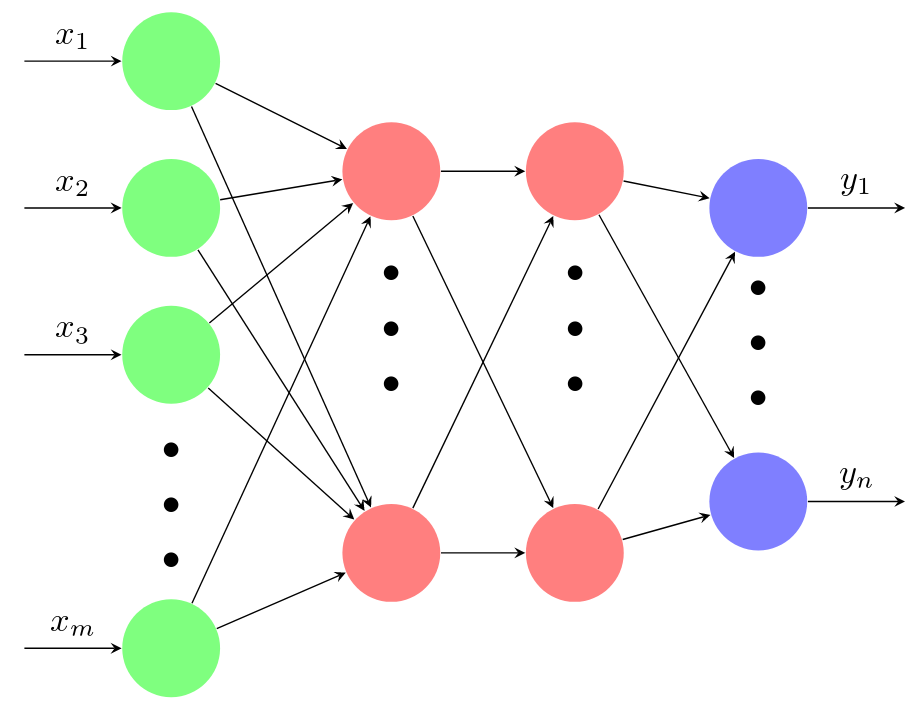}
\label{fig:fnn_plain}
\caption{Schema of a typical feedforward neural network $\mathbb{R}^m \mapsto \mathbb{R}^n$ with two hidden layers with $\bm{x}$ as input and $\bm{y}$ as output}
\end{figure}

The problem of finding a good neural network model is equivalent to searching for a set of parameters $\bm{\theta}$ and $\bm{b}$ that optimize the mean-square-error on training data with weight decay regularization
\begin{equation}{\label{eq:fnn}}
\bm{W}^{*} = \{\bm{\theta}^{*}, \bm{b}^{*}\} = \argmin_{\bm{\theta}, \bm{b}}\frac{1}{\left\lvert I^p_{train} \right\rvert} \sum_{j \in I^p_{train}}{ \left\lVert \bm{z}^j - \bm{\hat{G}}(\bm{y}_{eco}^{j}; \bm{\theta}, \bm b)  \right\rVert_2^2} + \lambda \sum_{l=1}^{L} \left\lVert {\theta}_l \right\rVert^2_F,
\end{equation}
where $\lVert(\cdot)\rVert_F$ denotes the Frobenius norm. The weights are initialized using the standard truncated normal distribution, and a first order gradient-based technique~\cite{kingma2014adam} is used for optimization. Unfortunately, due to the non-convex nature of \cref{eq:full_basic_model}, one can often only afford to find a local minimum instead of the global minimum. However, in practice, a local minimum is usually satisfactory if it is properly trained and validated. Hyperparameters for each case given below are selected using grid search in a certain range. The model is implemented with the Keras \cite{chollet2015keras} and Tensorflow libraries \cite{tensorflow}.




\subsection{Reducing the computational complexity of multi-time effects}

From a training perspective, the most difficult part of generating the polynomial model is in characterizing multi-time effects. The most general way of treating the memory effect in \cref{eq:general_model_1} is to consider all interactions between the past, i.e., cross-time memory effects, similar to a nonlinear autoregressive network with exogenous inputs (NARX) model \cite{billings2013nonlinear} with multi-time correlations. Unfortunately, this method of discovering ${\bm G}$ is severely hindered by the curse of dimensionality. Introducing full interactions with polynomial features up to a total degree $k \in \mathbb{N}^{+}$ would lead to a number of features scaling as $L^p_k = {{2(1+p)Q+k}\choose {k}} \propto (2(1+p)Q)^k$. 

An alternative strategy to build computationally feasible set of features is to \emph{assume} that full memory effects can be approximated with linear superposition of multi-time nonlinear features, i.e., linear treatment of multi-time memory in the form
\begin{equation}{\label{eq:general_model_time_parallel}}
\bm{G}(\bm{\hat{x}}(t - s_0), 
\ldots,
\bm{\hat{x}}(t-s_p),
\textcolor{black}{\bm{\delta}(t - s_0)},,
\ldots,
\textcolor{black}{\bm{\delta}(t-s_p)};\bm{W}) \approx  
\sum_{i=0}^{p}\bm{{G}}_i(\bm{\hat{x}}(t-s_i), {\bm{\delta}(t-s_i)} ; \bm{W}^{i}),
\end{equation} 
where $\bm{{G}}_i(\cdot,\cdot):\mathbb{R}^{Q} \times \mathbb{R}^{Q} \mapsto \mathbb{R}^{Q}$ represents the contribution of the system at $t=t-s_i$ to $t$ in closure dynamics. 
With a polynomial basis, this approach will reduce the number of features up to $k$ total degrees to $L^{p}_{k,reduced} = (1+p){{2Q+k\choose{k}}} - p \propto p(2Q)^k$, which grows linearly with $p$ \footnote{$-p$ comes from removing the redundant constant feature}.

The above assumption would lead to a reduction in the number of fitting parameters with respect to increasing memory length $p$. We note that, if this decoupling is applied to the time delay neural network (TDNN) model, this strategy can be viewed as a regularization of the neural network model by pruning weights between units of different time instances.

\subsection{Model selection}

Since both types of models mentioned above require the specification of hyperparameters before training, model selection is an important aspect. 
For the sparse polynomial regression model, the associated hyperparameters are:
\begin{itemize}
\item maximum total degree of polynomials: $k$
\item maximum number of previous states: $p$
\item penalty coefficient: $\lambda$
\end{itemize}

In practice, we choose $p$ and $k$ heuristically and as small as possible, while still fitting the model with ordinary-least-square (OLS) and keeping the total number of features smaller than the number of samples to ensure strict convexity of the \emph{lasso} problem \cite{tibshirani2013lasso}. 
To determine $\lambda$, we draw the \emph{lasso} path to decide the most appropriate solution that balances complexity and mean-squared-error (MSE) error. It should be noted that when drawing the \emph{lasso} path, we split the training data in time; the first 80\% is training data used to compute the \emph{lasso} path, and the last 20\% is validation data. The goal is to obtain a robust model that generalizes beyond the training set.

For the neural network, a logical strategy of hyperparameter selection has proved to be challenging for even the simplest standard feedforward neural network. In the present work, since the problem size is small, we choose hyperparameters via simple grid search for the type of activation function and number of hidden units. 

\subsection{Evaluation of MSE a priori and a posteriori}
Notice that the optimization problems described in \cref{eq:lasso,eq:fnn} only guarantee performance in an a priori sense on training data. A proper evaluation of the model should be performed both in an a priori sense as mean-squared-error over the data index set $I^p$, 
\begin{equation}{\label{eq:min_general_apriori}}
e^{apr}_{\rm{MSE}} = \frac{1}{\left\lvert I^p \right\rvert} \sum_{j \in I^p}{ \left\lVert \bm{z}^j - {\bm{\hat{G}}}(\bm{y}^j;{\bm{W}^*)} \right\rVert_2^2}, 
\end{equation} 
and in an a posteriori sense in which only the initial condition is given to the model, as
\begin{equation}
e^{apo}_{\rm{MSE}} = \frac{1}{\left\lvert I^p \right\rvert}\sum_{j \in I^p}{ \left\lVert \bm{\hat{x}}^j - \bm{\hat{x}}^{*j}  \right\rVert_2^2} ,
\end{equation}
where $\bm{\hat{x}}^{*}(t)$ is the solution of the augmented system \cref{eq:general_model_1,eq:general_model_2} with $\bm{\hat{G}}(\cdot;\bm{W}^*)$ starting with an exact initial condition. This type of a posteriori evaluation is also called free-run in the time series modeling community~\cite{aguirre2009modeling}.

\section{Results - Linear system} \label{sec:linear}
To illustrate the idea of applying operator inference and to motivate further developments, the polynomial closure model is first applied on a three dimensional linear system shown below

\begin{equation}{\label{eq:3d_linear_system}}
\dfrac{d}{dt}
\begin{bmatrix}
    x_1\\ x_2\\ x_3
    \end{bmatrix}    = 
    \begin{bmatrix}    0 & -1 & -1\\
    0.5 & -1.1 & 1.5\\
    1 & -3 & 0.5
    \end{bmatrix}
    \begin{bmatrix}
    x_1 \\
    x_2 \\
    x_3
    \end{bmatrix},
\end{equation}
where ${\hat{x}}(0) = {\hat{x}}^{0} = x_{1}^{0} = 1$ and $x_{2}^{0} = x_{3}^{0} = 0$. The first-order forward discretized form of \cref{eq:3d_linear_system} is shown in \cref{eq:3d_linear_system_disc} with total degrees of freedom $N=3$ and number of reduced states $Q=1$
\begin{equation}{\label{eq:3d_linear_system_disc}}
    \begin{bmatrix}
    x^{n+1}_1\\ x^{n+1}_2\\ x^{n+1}_3
    \end{bmatrix}
    = 
    \begin{bmatrix}
    x^{n}_1\\ x^{n}_2\\ x^{n}_3
    \end{bmatrix}
    + \Delta t
    \begin{bmatrix}
    0 & -1 & -1\\
    0.5 & -1.1 & 1.5\\
    1 & -3 & 0.5
    \end{bmatrix}
    \begin{bmatrix}
    x^{n}_1 \\
    x^{n}_2 \\
    x^{n}_3
    \end{bmatrix}.
\end{equation}


Consequently, following the operator inference framework with the polynomial form in \cref{eq:general_model_1,eq:general_model_2} and a linear superposition assumption of multi-time effects, we have ${{\hat{x}}} = x_1$, ${\hat{F}} = 0$ for the following ROM formulation
\begin{align}
\hat{x}^{n+1} &= \hat{x}^{n} + \Delta t \delta^n, \\
\delta^{n+1} &= \delta^{n} + \Delta t \sum_{i=0}^{p}\bm{G}_i.
\end{align}

The goal is to extract a functional form of the governing equation $\{\bm{G}_i\}_{i=0}^{p}$ for closure $\delta$ from data, i.e., to determine $(\delta^{n+1} - \delta^{n})/\Delta t$ as a function of previous $\hat{x}$ and $\delta$ from data. The true closure is ${\delta} = { -x_2-x_3}$, $\bm{\tilde{x}} = [x_2^\top, x_3^\top]^\top$, which is assumed to be unknown to the ROM. The simulation is run for $t \in [0,40]$ and $\Delta t =0.01$, resulting in a collection of 4000 snapshots in $\{ \hat{x}, \delta \}$. The first 10\% of data is set for training and the remaining 90\% as testing data. 

For this 3D linear system, the exact solution for the closure dynamics is 
\begin{align}{\label{eq:3d_linear_analytical}}
\dfrac{\delta^{n+1} - \delta^{n}}{\Delta t} = \left (\dfrac{3}{2} - \dfrac{17\Delta t}{20} \right) \hat{x}^{n-1} &- \left( \dfrac{ 183\Delta t + \left({35\Delta t + 10}\right) \left(\dfrac{1}{\Delta t} - \dfrac{41}{10}\right)  }{10} \right) \delta^{n-1} \\ 
&- \dfrac{3}{2} \hat{x}^{n} + \left( \dfrac{ 35\Delta t + 10 }{10 \Delta t} - \dfrac{41}{10} \right)\delta^{n}.  \nonumber
\end{align}

\subsection{Model selection}
As displayed in \cref{fig:3d_linear_lasso_path_coef,fig:3d_linear_lasso_path_nz}, by applying the model with $p=1$, $k=1$ and $\lambda$ chosen as $10^{-12}$ from the Pareto front of \emph{lasso} path, the resulting model is found to only contain 4 non-zero terms. 

\begin{figure}[H]
	\centering
	\includegraphics[scale=0.15]{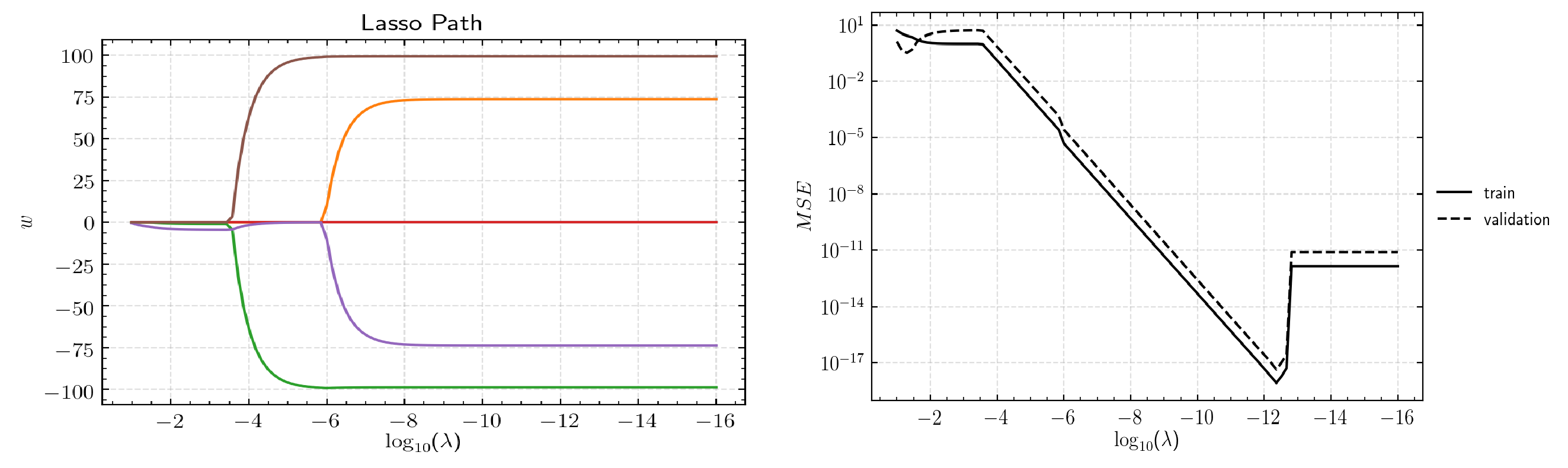}
	\caption{ lasso path for the 3D linear system. Left: coefficients. Right: MSE.}
	\label{fig:3d_linear_lasso_path_coef}
\end{figure}

\begin{figure}[H]
	\centering
	\includegraphics[scale=0.5]{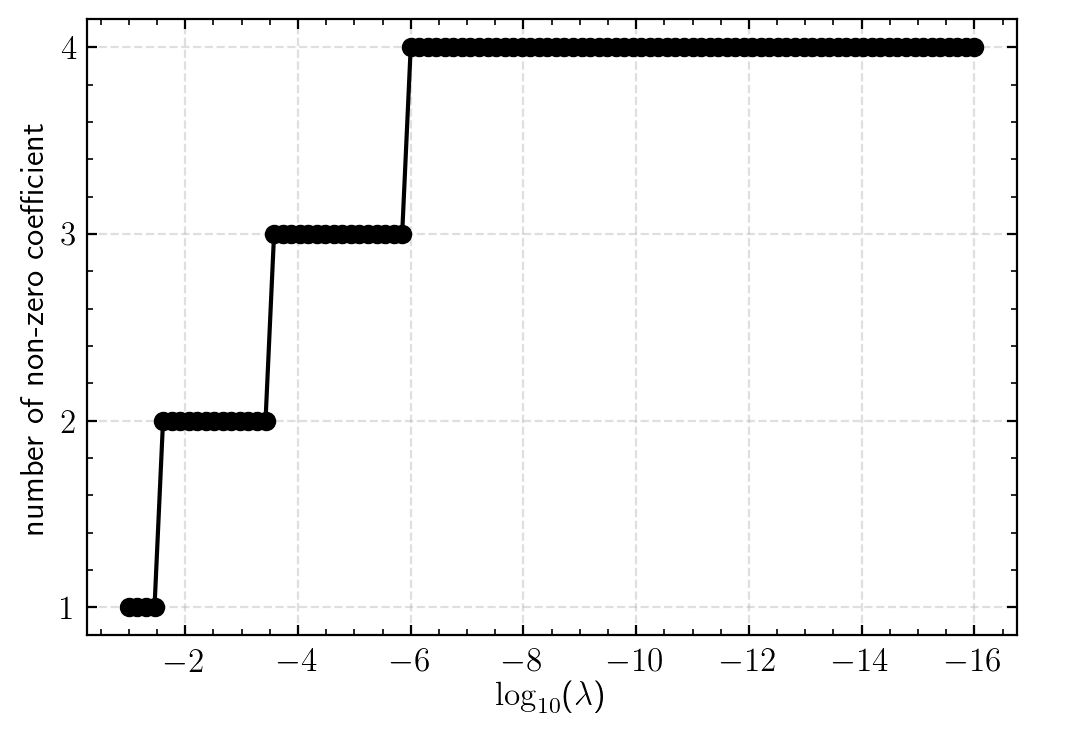}
	\caption{lasso path for the 3D linear system: number of non-zero terms}
	\label{fig:3d_linear_lasso_path_nz}
\end{figure}

\subsection{A posteriori evaluation of model performance}
Using the hyperparameters determined above, the predicted trajectory of $\bm{\hat{x}}(t)$ is found to match the target trajectory to an excellent degree, as shown in \cref{fig:3d_linear_posteriori}. 
\begin{figure}[H]
	\centering
	\includegraphics[scale=0.15]{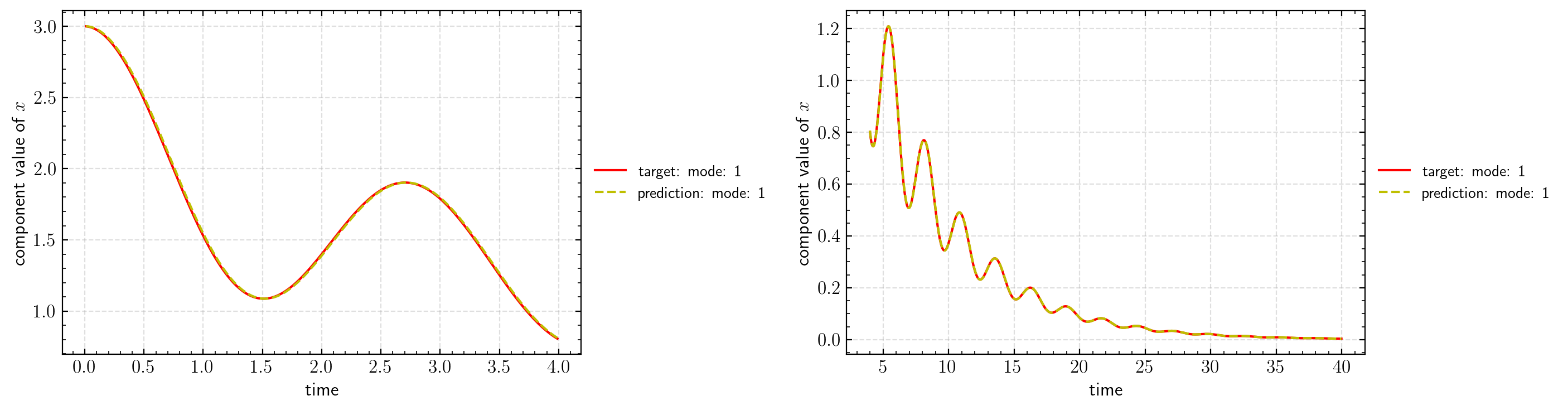}
	\caption{A posteriori model performance on the linear system. Left: training data. Right: testing data.}
	\label{fig:3d_linear_posteriori}
\end{figure}

However, if one sets $p=0$, the resulting model cannot produce good predictions, as the true solution is  $\delta^{n+1} - \delta^{n} = -\Delta t(1.5 \hat{x}^{n} + 4.1\delta^{n} + 6.1x_3^{n})$. Since $x_3^{n}$ is unknown to $\{ \hat{x}^{n}, {\delta}^{n} \}$,  additional memory length is required. The \emph{lasso} path is shown in \cref{fig:3d_linear_lasso_path_coef_,fig:3d_linear_lasso_path_nz_} for the case with insufficient memory. The corresponding a posteriori performance is shown to be poor in \cref{fig:3d_linear_posteriori_p0}. 
\begin{figure}[H]
	\centering
	\includegraphics[scale=0.15]{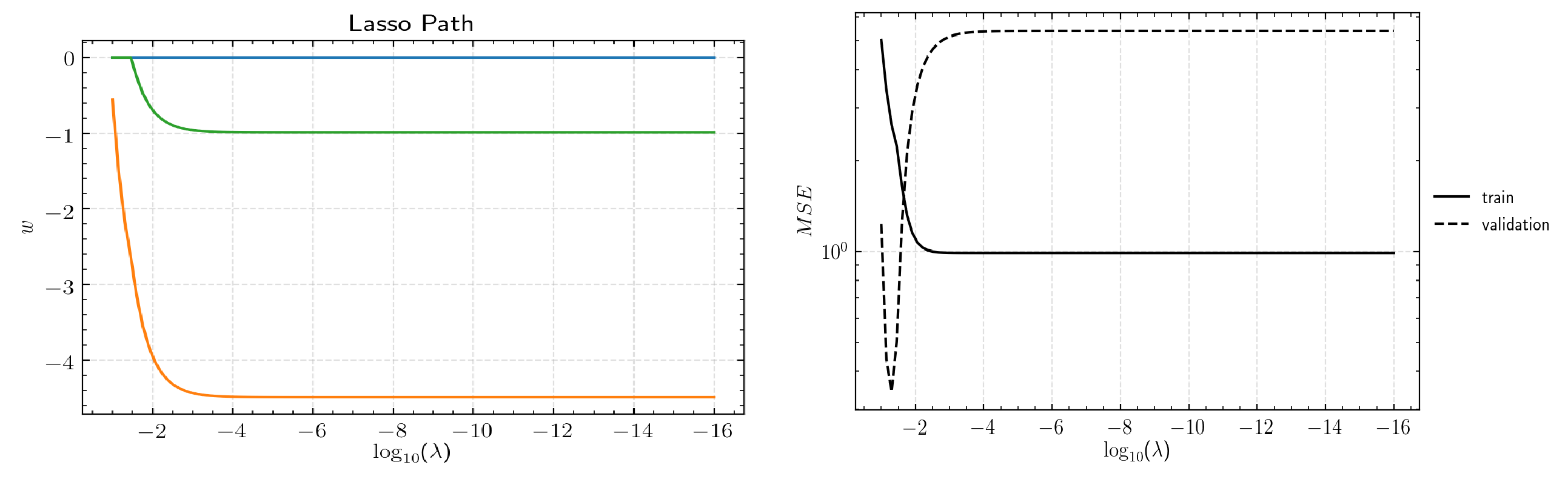}
	\caption{lasso path for the 3D linear system. Left: coefficients. Right: MSE. }
	\label{fig:3d_linear_lasso_path_coef_}
\end{figure}

\begin{figure}[H]
	\centering
	\includegraphics[scale=0.5]{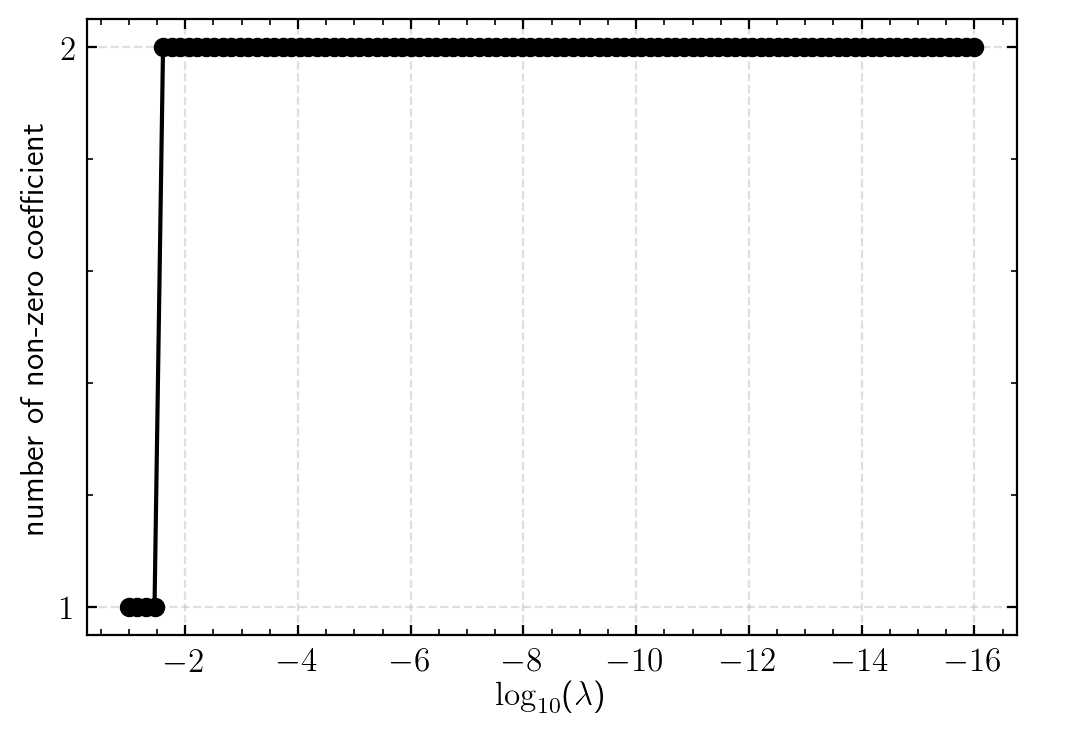}
	\caption{lasso path for the 3D linear system: number of non-zero terms}
	\label{fig:3d_linear_lasso_path_nz_}
\end{figure}

\begin{figure}[H]
	\centering
	\includegraphics[scale=0.15]{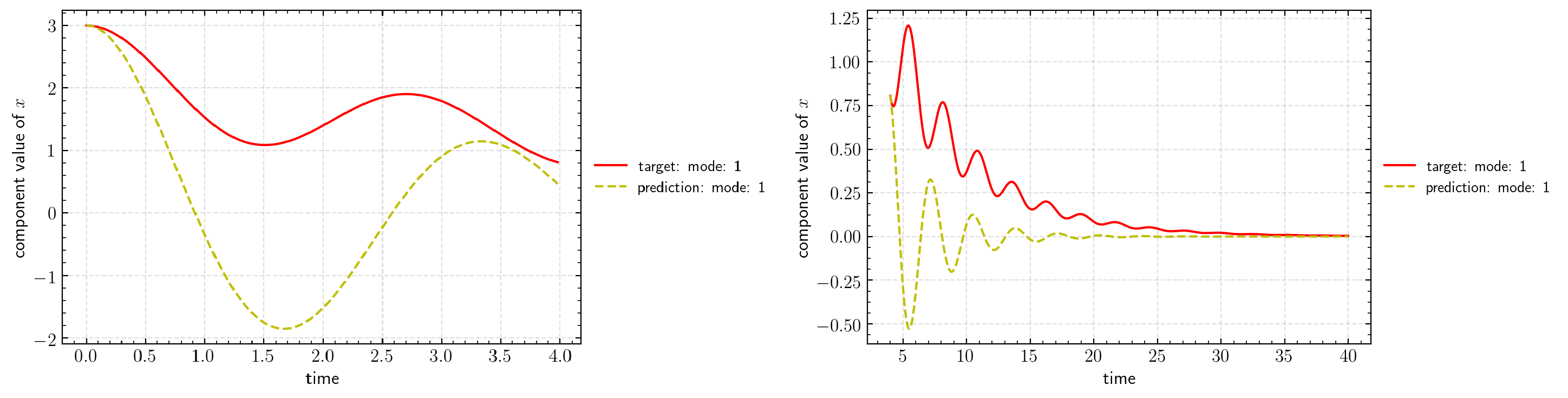}
	\caption{A posteriori model performance on the linear system with $p=0$. Left: training data. Right: testing data.}
	\label{fig:3d_linear_posteriori_p0}
\end{figure}


One might suspect that the sparsest solution, i.e., $\min \lVert \bm{w}_k \rVert_0$, should  contain at most 3 non-zero terms because as mentioned before, there is one redundancy in $\{{x}_1^{n}, {x}_1^{n-1}, {\delta}^{n-1}\}$. The \emph{lasso} based on the $\ell_1$ norm does not guarantee the sparsest solution in the sense of the $\ell_0$ norm, but it makes the problem computationally tractable. 

\section{Theoretical results} \label{sec:theory}

In this section, theoretical results are presented with regard to the capability of the closure model to determine the resolved dynamics with time-delayed features.

\begin{definition}[Nonlinear dynamical system with dual linear closure]{\label{def:general_ds_dual_linear}}
A nonlinear dynamical system with dual linear closure is defined in the following form:
$$
\dfrac{d}{dt}
\begin{bmatrix}
\textcolor{black}{\bm{\hat x}}        \\
\textcolor{black}{\bm{\tilde x}}
\end{bmatrix}
=
\begin{bmatrix}
\bm{F}(\bm{\hat{x}}) +  A_{12}\bm{\tilde{x}}\\
\bm{H}(\bm{\hat{x}}) +  A_{22}\bm{\tilde{x}}
\end{bmatrix},
$$
where $\bm{\hat{x}} \in \mathbb{R}^{Q}$, $\bm{\tilde{x}} \in \mathbb{R}^{N-Q}$,  $\bm{F(\cdot)}: \mathbb{R}^{Q}  \mapsto \mathbb{R}^{Q}$ and $\bm{H(\cdot)}: \mathbb{R}^{Q} \mapsto \mathbb{R}^{N-Q}$, $A_{12} \in \mathbb{R}^{Q\times {N-Q}}$, $A_{22} \in \mathbb{R}^{N-Q \times N-Q}$ with $\bm{\delta} = A_{12}\bm{\tilde{x}}$, $N \in \mathbb{N}$ and $Q \in \mathbb{N}$, $Q<N$.

The corresponding first order forward discretized dynamical system is
$$
\begin{bmatrix}
\textcolor{black}{\bm{\hat x}^{n+1}}        \\
\textcolor{black}{\bm{\tilde x}^{n+1}}
\end{bmatrix}
=
\begin{bmatrix}
\textcolor{black}{\bm{\hat x}^{n}}        \\
\textcolor{black}{\bm{\tilde x}^{n}}
\end{bmatrix}
+
\Delta t
\begin{bmatrix}
\bm{F}(\bm{\hat{x}}^{n}) +  A_{12}\bm{\tilde{x}}^{n}\\
\bm{H}(\bm{\hat{x}}^{n}) +  A_{22}\bm{\tilde{x}}^{n}
\end{bmatrix},
$$
where $n$ denotes steps in time.
\end{definition}

The exact discrete closure dynamics is 
$$
\bm{\delta}^{n+1} = A_{12}\bm{\tilde{x}}^{n+1} = \bm{\delta}^{n} + \Delta t (A_{12}A_{22} \bm{\tilde{x}}^{n}  + A_{12}\bm{H}(\bm{\hat{x}}^{n})).
$$

Note that the only unknown is $\bm{\tilde{x}}^{n}$. Clearly, a sufficient condition for the data-driven framework to exactly represent the closure term $A_{12}\bm{\tilde{x}}^{n}$, would be the determination of $\bm{\tilde{x}}^{n}$ from a linear combination of $\bm{\hat{x}}^{n-1},\ldots,\bm{\hat{x}}^{n-p}$, $\bm{\delta}^n,\ldots,\bm{\delta}^{n-p}$. It will be shown that, with certain structures of $A_{12}$ and $A_{22}$, one can recover the entire history of $\bm{\tilde{x}}$ using up to previous $p$ steps. The following proposition has strong similarities with the observability problem in linear system theory.

\begin{theorem}{\label{thm:recover_linear}} 
For $k \in \mathbb{N}^{+}$, define the following matrix $\mathcal{O}_{k} \in \mathbb{R}^{ kQ \times (N-Q)}$
$$
\mathcal{O}_{k} = \begin{bmatrix}
A_{12} \\
A_{12}A_{22} \\
\vdots \\
A_{12} A_{22}^{k-1} 
\end{bmatrix},
$$
and the following mapping $r_{\mathcal{O}}(\cdot): \mathbb{N}^{+} \mapsto \mathbb{N}$

$$r_{\mathcal{O}}(k)=\rank(\mathcal{O}_{k}).$$ 
If $\mathcal{O}_{N-Q}$ is full rank, i.e., $r_{\mathcal{O}}(N-Q) = N-Q$
, then for the first order forward discretized system with dual linear closure, $\exists p, n \in \mathbb{N}$, with collected $\bm{\delta} \in \mathbb{R}^{Q}$ and $\bm{\hat{x}} \in \mathbb{R}^{Q}$ up to step $n$, such that $ \bm{\tilde{x}}^{n}, \ldots, \bm{\tilde{x}}^{n-p}$ can be determined as a linear combination of $\bm{H}(\bm{\hat{x}}^{n-1}),\ldots,\bm{H}(\bm{\hat{x}}^{n-p})$, $\bm{\delta}^n,\ldots,\bm{\delta}^{n-p}$. For $p=0$, only $\bm{\delta}^{n}$ is used. 

Further, the minimal $p_{*}$ that satisfies the above is $$p_{*} = \min \Omega_{\mathcal{O}}
-1, $$
where $$
\Omega_{\mathcal{O}} = \{ l | l\in \mathbb{N}^{+}, r_{\mathcal{O}}(l) = N-Q \}.
$$

\end{theorem}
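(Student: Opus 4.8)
The plan is to read \cref{thm:recover_linear} as a discrete-time observability and state-reconstruction statement for the linear subsystem that governs $\bm{\tilde x}$, treating $\bm{H}(\bm{\hat x}^{k})$ as a known exogenous input and the closure $\bm{\delta}^{k}=A_{12}\bm{\tilde x}^{k}$ as the measured output.

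First I would rewrite the discrete update for the unresolved state as
$$\bm{\tilde x}^{k} = \bar A_{22}\,\bm{\tilde x}^{k-1} + \Delta t\,\bm{H}(\bm{\hat x}^{k-1}), \qquad \bar A_{22} := I + \Delta t\,A_{22},$$
so that a whole window of measurements becomes an affine function of a single unknown state. Rather than solve for $\bm{\tilde x}^{n}$ (which would force me to invert $\bar A_{22}$), I would solve for the \emph{oldest} state $\bm{\tilde x}^{n-p}$ in the window, which avoids any invertibility assumption. Unrolling the recursion gives, for $0\le j\le p$,
$$\bm{\delta}^{n-p+j} = A_{12}\bar A_{22}^{\,j}\,\bm{\tilde x}^{n-p} + \Delta t\sum_{i=0}^{j-1} A_{12}\bar A_{22}^{\,j-1-i}\,\bm{H}(\bm{\hat x}^{n-p+i}),$$
and stacking $j=0,\dots,p$ isolates $\bm{\tilde x}^{n-p}$ as
$$\begin{bmatrix}\bm{\delta}^{n-p}\\ \vdots \\ \bm{\delta}^{n}\end{bmatrix} = \bar{\mathcal O}_{p+1}\,\bm{\tilde x}^{n-p} + \bm{c},$$
where $\bar{\mathcal O}_{p+1}$ is the observability matrix built from $\bar A_{22}$ and $\bm{c}$ collects the known input contributions. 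If $\bar{\mathcal O}_{p+1}$ has full column rank $N-Q$, then $\bm{\tilde x}^{n-p}$ is the unique least-squares solution $\bar{\mathcal O}_{p+1}^{+}(\cdot-\bm{c})$, hence a fixed linear combination of $\bm{\delta}^{n-p},\dots,\bm{\delta}^{n}$ and $\bm{H}(\bm{\hat x}^{n-p}),\dots,\bm{H}(\bm{\hat x}^{n-1})$; the remaining $\bm{\tilde x}^{n-p+1},\dots,\bm{\tilde x}^{n}$ then follow by forward substitution in the recursion, each again a linear combination of the same quantities.

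The main obstacle, and the one point needing genuine work, is that the hypothesis is phrased through $\mathcal O_{k}$ built from $A_{22}$, whereas the reconstruction naturally produces $\bar{\mathcal O}_{p+1}$ built from $\bar A_{22}=I+\Delta t A_{22}$. The key lemma I would establish is $\rank \bar{\mathcal O}_{k} = \rank \mathcal O_{k} = r_{\mathcal O}(k)$ for every $k$. This follows from the binomial expansion $A_{12}\bar A_{22}^{\,j}=\sum_{i=0}^{j}\binom{j}{i}(\Delta t)^{i}A_{12}A_{22}^{\,i}$, which exhibits $\bar{\mathcal O}_{k}=(T\otimes I_{Q})\,\mathcal O_{k}$ for a lower-triangular $k\times k$ matrix $T$ with diagonal entries $(\Delta t)^{j}\ne 0$; since $T$ is invertible, the two matrices share the same column rank. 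Consequently full column rank of $\bar{\mathcal O}_{p+1}$ is equivalent to $r_{\mathcal O}(p+1)=N-Q$, which proves the existence claim for any $p$ with $p+1\in\Omega_{\mathcal O}$ (taking, say, $n\ge p$), and specializes at $p=0$ to $A_{12}$ having full column rank, with $\bm{\tilde x}^{n}=A_{12}^{+}\bm{\delta}^{n}$.

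Finally, for minimality I would note that $r_{\mathcal O}(\cdot)$ is nondecreasing, since appending row blocks cannot lower rank, and that by Cayley--Hamilton applied to the $(N-Q)\times(N-Q)$ matrix $A_{22}$ the block $A_{12}A_{22}^{\,N-Q}$ lies in the row span of the earlier blocks, so $r_{\mathcal O}$ saturates at $r_{\mathcal O}(N-Q)=N-Q$ by step $N-Q$ and remains there. Thus $\Omega_{\mathcal O}$ is an upper set $\{l:\ l\ge \min\Omega_{\mathcal O}\}$ with $\min\Omega_{\mathcal O}\le N-Q$, and reconstruction is possible precisely when $p+1\ge\min\Omega_{\mathcal O}$, giving the smallest admissible memory $p_{*}=\min\Omega_{\mathcal O}-1$. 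For $p<p_{*}$ the matrix $\bar{\mathcal O}_{p+1}$ is column-rank-deficient, so the linear system for $\bm{\tilde x}^{n-p}$ admits a nontrivial null space and the reconstruction is non-unique, confirming that $p_{*}$ is indeed minimal.
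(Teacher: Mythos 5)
Your proposal is correct, and at its core it is the same observability argument as the paper's: both reduce the question to a column-rank condition on an observability-type matrix built from $\bar A_{22}=I+\Delta t\,A_{22}$ and then transfer that condition to $\mathcal{O}_{p+1}$. The packaging differs in ways that are worth noting. The paper stacks all unknowns $\bm{\tilde{x}}^{n},\ldots,\bm{\tilde{x}}^{n-p}$ together with both the evolution and projection equations into one large system $\bm{\Gamma}_p\bm{X}_p=\bm{\Sigma}_p$, computes $\rank(\bm{\Gamma}_p)$ by block row operations, and recovers $\bm{X}_p$ with a left Moore--Penrose inverse; you instead unroll the recursion onto the oldest state $\bm{\tilde{x}}^{n-p}$ and obtain the rest by forward substitution, which is leaner and makes transparent that no invertibility of $\bar A_{22}$ is ever needed (the paper's row operations implicitly perform the same unrolling). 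More substantively, the step the paper asserts without justification --- that the stacked blocks $A_{12}(I+\Delta t\,A_{22})^{j}$, $j=0,\ldots,p$, have the same rank as $\mathcal{O}_{p+1}$ --- is exactly what your key lemma supplies: the binomial expansion gives $\bar{\mathcal{O}}_{k}=(T\otimes I_{Q})\,\mathcal{O}_{k}$ with $T$ lower triangular and nonzero diagonal entries $(\Delta t)^{j}$, hence invertible, so the ranks agree. You also make the minimality claim rigorous where the paper only establishes sufficiency: for $p<p_{*}$ the matrix $\bar{\mathcal{O}}_{p+1}$ has a nontrivial null space, so two distinct trajectories of $\bm{\tilde{x}}$ are consistent with the same data and no reconstruction map can exist; this necessity direction is absent from the paper's proof. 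In short, your route proves the same theorem by the same mechanism but fills two genuine gaps in the published argument.
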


\begin{proof}
Consider the first order forward discretized system of a dynamical system with linear closure, with $n, p \in \mathbb{N}$, $n > p$. We have the following \emph{evolution} equations for the unresolved variable $\bm{\tilde{x}}$
\begin{gather}
\bm{\tilde{x}}^{n} = (I+\Delta t A_{22})\bm{\tilde{x}}^{n-1} + \Delta t \bm{H}( \bm{\hat{x}}^{n-1}),\\
\cdots \nonumber \\ 
\bm{\tilde{x}}^{n-p+1} = (I+\Delta t A_{22})\bm{\tilde{x}}^{n-p} + \Delta t \bm{H} ( \bm{\hat{x}}^{n-p}),
\end{gather}
and \emph{projection} equations for $\bm{\delta}$
\begin{gather}
A_{12}\bm{\tilde{x}}^{n} = \bm{\delta}^{n},  \\
\cdots \nonumber \\
A_{12}\bm{\tilde{x}}^{n-p} = \bm{\delta}^{n-p}.  
\end{gather}
Note that the independent unknowns are $\{ \bm{\tilde{x}}^{n-i} \}_{i=0}^{p}$ and we are provided with $\{ \bm{\hat{x}}^{n-i} \}_{i=1}^{p}$ and $\{ \bm{\delta}^{n-i} \}_{i=0}^{p}$. 
Rearranging equations in matrix form, we have 
\begin{equation}{\label{eq:thm_linear_system}}
\bm{\Gamma}_p \bm{X}_p
= \bm{\Sigma}_p,
\end{equation}
where \begin{equation}
\bm{\Gamma}_{p} = \begin{bmatrix}
I & -(I+\Delta t A_{22})  & \ldots & \bm{0} \\
\bm{0} & I & -(I+\Delta t A_{22}) & \ldots  \\
\vdots & \vdots & \vdots & \vdots \\
\bm{0} & \ldots & I & -(I+\Delta t A_{22})  \\
A_{12} & \bm{0} & \bm{0} & \ldots \\
\bm{0} & A_{12} & \bm{0} & \ldots \\
\vdots & \vdots & \vdots & \vdots \\
\bm{0} & \bm{0} &  \ldots  & A_{12}
\end{bmatrix},
\end{equation}
\begin{equation}
\bm{X }_p = \begin{bmatrix}
\bm{\tilde{x}}^{n} \\
\bm{\tilde{x}}^{n-1} \\
\vdots \\
\vdots \\
\vdots \\
\vdots \\
\bm{\tilde{x}}^{n-p} 
\end{bmatrix} ,
\qquad
\bm{\Sigma}_p = \begin{bmatrix}
\Delta t \bm{H}( \bm{\hat{x}}^{n-1}) \\
\Delta t \bm{H}(\bm{\hat{x}}^{n-2}) \\
\vdots \\
\Delta t \bm{H} (\bm{\hat{x}}^{n-p}) \\
\vdots \\
\bm{\delta}^{n} \\
\vdots \\
\bm{\delta}^{n-p}
\end{bmatrix},
\end{equation}

Using row operations to remove the diagonal block matrix of $A_{12}$, 

\begin{align}
\bm{\Gamma}_{p} \rightarrow \begin{bmatrix}
I & -(I+\Delta t A_{22})  & \ldots & \bm{0} \\
\bm{0} & I & -(I+\Delta t A_{22}) & \ldots  \\
\vdots & \vdots & \vdots & \vdots \\
\bm{0} & \bm{0} & I & -(I+\Delta t A_{22})  \\
\bm{0} & \bm{0} &  \ldots  & A_{12}(I+\Delta t A_{22})^p \\
\bm{0} & \bm{0} & \ldots & A_{12}(I+\Delta t A_{22})^{p-1} \\
\vdots & \vdots & \vdots & \vdots \\
\bm{0} & \bm{0} &  \ldots  & A_{12}
\end{bmatrix} ,
\end{align}

Thus 
\begin{equation}
\rank (\bm{\Gamma}_p) = p(N-Q) + \rank(\begin{bmatrix}
A_{12}(I + \Delta t A_{22})^p \\
A_{12}(I + \Delta t A_{22})^{p-1} \\
\vdots \\
A_{12}
\end{bmatrix}).
\end{equation}

Note that 
\begin{equation}
\rank(\begin{bmatrix}
A_{12}(I + \Delta t A_{22})^p \\
A_{12}(I + \Delta t A_{22})^{p-1} \\
\vdots \\
A_{12}
\end{bmatrix} = 
\rank(\begin{bmatrix}
A_{12} A_{22}^p \\
A_{12} A_{22}^{p-1} \\
\vdots \\
A_{12}
\end{bmatrix}) = \rank (\mathcal{O}_{p+1}).
\end{equation}

From basic linear algebra, it is known that $r_{\mathcal{O}}(\cdot)$  is bounded and monotonic, where $r_{\mathcal{O}}(\cdot): \mathbb{N} \mapsto \mathbb{N}$. Also recall that $\mathcal{O}_{N-Q}$ is full rank thus the following set $\Omega$ is not empty 
\begin{equation}
\Omega_{\mathcal{O}} = \{ l | l\in \mathbb{N}^{+}, r_{\mathcal{O}}(l) = N-Q \}.
\end{equation}

Therefore setting
\begin{equation}
p_{*} =  \min \Omega_{\mathcal{O}}  - 1,
\end{equation} 
we will have 
\begin{equation}
\rank(\bm{\Gamma}_p) = p(N-Q) + (N-Q) = (p+1)(N-Q),
\end{equation}
indicating $\bm{\Gamma}_p$ is full column rank. Therefore, consider $\bm{\Gamma}^{+}_p$ as the \emph{left} Moore-Penrose inverse of $\bm{\Gamma}_{p}$
\begin{equation}
\bm{\Gamma}^{+}_p = (\bm{\Gamma}^\top_p\bm{\Gamma}_p)^{-1}\bm{\Gamma}_p ,
\end{equation}
and thus
\begin{equation}
\bm{X}_p = \bm{\Gamma}^{+}_p \bm{\Gamma}_p \bm{X}_p = \bm{\Gamma}^{+}_p\bm{\Sigma}_p.
\end{equation}



\end{proof}

Again, note that once $\bm{\tilde{x}}^{n}$ is determined from past time instances of $\bm{\delta}$ and $\bm{\hat{x}}$ up to step $p$, the closure dynamics is fully determined as well. As an example, applying \cref{thm:recover_linear} to the 3D discrete linear system described in \cref{eq:3d_linear_system}, 
$A_{12} = \begin{bmatrix}
-1 & -1
\end{bmatrix}$ and 
$A_{22} = \begin{bmatrix}
- 1.1 & 1.5 \\
- 3 & 0.5
\end{bmatrix}$, $\rank( A_{12} ) = 1$, $\rank(\begin{bmatrix}
A_{12} \\
A_{12} A_{22}
\end{bmatrix}) = \rank( \begin{bmatrix} -1 & -1 \\ 4.1 & -2 \end{bmatrix} ) = 2$. Therefore, $p_{*} = \min \Omega_{\mathcal{O}} -1 = 1$.
As a trivial observation, based on the \cref{thm:recover_linear}, one can immediately obtain the following proposition.
\begin{proposition}
If $A_{12}$ has full column rank, $\bm{\tilde{x}}^{n}$ can be determined with only $\bm{\delta}^{n}$.
\end{proposition}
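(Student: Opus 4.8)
The plan is to give the result in two equivalent ways, both short. The direct approach starts from the projection relation $\bm{\delta}^{n} = A_{12}\bm{\tilde{x}}^{n}$ recorded in \cref{def:general_ds_dual_linear}. Since $A_{12} \in \mathbb{R}^{Q \times (N-Q)}$ is assumed to have full column rank, the Gram matrix $A_{12}^\top A_{12} \in \mathbb{R}^{(N-Q)\times(N-Q)}$ is invertible, so $A_{12}$ admits a left Moore--Penrose inverse $A_{12}^{+} = (A_{12}^\top A_{12})^{-1}A_{12}^\top$ satisfying $A_{12}^{+}A_{12} = I_{N-Q}$. First I would left-multiply the projection relation by $A_{12}^{+}$ to obtain $\bm{\tilde{x}}^{n} = A_{12}^{+}\bm{\delta}^{n}$, which expresses $\bm{\tilde{x}}^{n}$ as a linear function of $\bm{\delta}^{n}$ alone, exactly as claimed. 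No past information, and in particular no history of $\bm{\delta}$ or $\bm{\hat{x}}$, is needed.

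The second approach exhibits the proposition as the $p_{*}=0$ specialization of \cref{thm:recover_linear}. When $A_{12}$ has full column rank one has $r_{\mathcal{O}}(1) = \rank(\mathcal{O}_{1}) = \rank(A_{12}) = N-Q$, so $1 \in \Omega_{\mathcal{O}}$ and hence $\min\Omega_{\mathcal{O}} = 1$, giving $p_{*} = \min\Omega_{\mathcal{O}} - 1 = 0$. Invoking the theorem with $p=0$ --- the case in which only $\bm{\delta}^{n}$ enters --- then yields recoverability of $\bm{\tilde{x}}^{n}$ from $\bm{\delta}^{n}$, with the recovery map coinciding with the left inverse $\bm{\Gamma}_{0}^{+}$, since at $p=0$ the matrix $\bm{\Gamma}_{0}$ collapses to $A_{12}$ and $\bm{\Gamma}_{0}^{+} = A_{12}^{+}$. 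I would present the direct pseudoinverse argument as the primary proof for self-containedness and then remark on this corollary reading to tie it back to \cref{thm:recover_linear}.

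There is essentially no obstacle to overcome: the entire content is the elementary fact that a full-column-rank matrix possesses a left inverse. The only points worth stating with care are that full column rank forces $Q \ge N-Q$ (the reduced dimension must be at least half of $N$ for the hypothesis to be attainable), and that the condition $\rank(A_{12}) = N-Q$ is precisely the full-rank requirement $r_{\mathcal{O}}(N-Q)=N-Q$ of \cref{thm:recover_linear} already being met at $k=1$, which is what drives the memory length down to $p_{*}=0$.
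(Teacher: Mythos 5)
Your proposal is correct, and it in fact contains the paper's own argument as a special case. The paper offers no standalone proof of this proposition: it is introduced with the phrase ``as a trivial observation, based on \cref{thm:recover_linear}, one can immediately obtain,'' which is precisely your second reading --- full column rank gives $r_{\mathcal{O}}(1)=\rank(A_{12})=N-Q$, hence $\min\Omega_{\mathcal{O}}=1$ and $p_{*}=0$, so the theorem applies with only $\bm{\delta}^{n}$ as data. Your primary argument is different and more elementary: it bypasses \cref{thm:recover_linear} entirely and solves the single projection equation $\bm{\delta}^{n}=A_{12}\bm{\tilde{x}}^{n}$ via the left inverse $A_{12}^{+}=(A_{12}^{\top}A_{12})^{-1}A_{12}^{\top}$, which exists exactly under the full-column-rank hypothesis. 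What each buys: the direct route is self-contained, exhibits the explicit recovery map $\bm{\tilde{x}}^{n}=A_{12}^{+}\bm{\delta}^{n}$, and makes clear that none of the machinery of $\bm{\Gamma}_{p}$ is needed; the corollary route situates the proposition inside the observability framework and explains \emph{why} the memory length collapses to $p_{*}=0$. Your observation that $\bm{\Gamma}_{0}$ reduces to $A_{12}$ (so the two recovery maps coincide) cleanly reconciles the two, and your remark that the hypothesis forces $Q\ge N-Q$ is a worthwhile clarification the paper does not make. As a minor aside, your pseudoinverse formula is the correct one; the paper's expression $\bm{\Gamma}^{+}_p=(\bm{\Gamma}^{\top}_p\bm{\Gamma}_p)^{-1}\bm{\Gamma}_p$ in the proof of \cref{thm:recover_linear} is missing a transpose on the trailing factor.
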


However, full observability on all past states of $\bm{\tilde{x}}$ is a very strong criterion to guarantee predictability of dual linear closure dynamics. Indeed, from a data-driven perspective, one only requires that the linear closure is in the $p$-\emph{time delayed} observable space of $\bm{\tilde{x}}$ as defined in  \cref{def:time_delay_obsv}. Therefore, we now turn our focus to finding $A_{12}\bm{\tilde{x}}^{n}$ directly. First, a strict definition of $p$-time delayed linear observable space is given below.

\begin{definition}[$p$-time delayed linear observable space]{\label{def:time_delay_obsv}}
For the first order forward discretized nonlinear dynamical system with dual linear closure, define the corresponding $p$ time delayed linear observable space $\chi_{p}$ as
$$
\chi_p = \{  \eta | \eta = v^\top \bm{X}_p,  v \in \Ima V_p   \},
$$
where $V_p$ is from the reduced singular value decomposition of $\bm{\Gamma}_p$ 
$$\bm{\Gamma}_{p} = U_pS_pV_p^\top,$$
with $U_p \in \mathbb{R}^{(pN+Q)\times r}$, $S_p \in \mathbb{R}^{r \times r}$, $V_p \in \mathbb{R}^{(p+1)(N-Q) \times r}$, $r = \rank(\bm{\Gamma}_p)$.
\end{definition}

Regarding the question of determining a general linear combination of $\bm{X}_p$ from a $p$-time delayed observable space, the following lemma shows that a rank test can provide essential insight.
\begin{lemma}{\label{lem:partial_obsv}}
For a nonlinear dynamical system with dual linear closure, for any quantity $\xi \in \mathbb{R}^{q \times 1}$ that is a linear combination of $\bm{X}_p$ characterized by $C$, 
$$
\xi = C^\top \bm{X}_p,
$$
where $C \in \mathbb{R}^{ (p+1)(N-Q) \times  q}$, if 
$$
\rank(V_p) = \rank(\begin{bmatrix}
C^\top \\
V_p^\top 
\end{bmatrix}),
$$
then
$$
\xi \in \chi_p,$$
i.e., $\xi$ is observable with $p$-time delayed information of $\bm{\delta}$ and $\bm{\hat{x}}$.
\end{lemma}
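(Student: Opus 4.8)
The plan is to strip the statement down to an elementary fact about column spaces and then read off observability from the reduced SVD already introduced in \cref{def:time_delay_obsv}. Since $\xi = C^\top \bm{X}_p$ is a vector, I interpret $\xi \in \chi_p$ componentwise: writing $c_i$ for the $i$th column of $C$, the $i$th entry is $\xi_i = c_i^\top \bm{X}_p$, and by the definition of $\chi_p$ it suffices to show that every $c_i$ lies in $\Ima V_p$, equivalently $\Ima C \subseteq \Ima V_p$.

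First I would convert the rank hypothesis into this containment. Stacking matrices vertically adds their \emph{row} spaces, and the rows of $C^\top$ and $V_p^\top$ are precisely the columns of $C$ and of $V_p$; hence $\rank\!\begin{bmatrix} C^\top \\ V_p^\top \end{bmatrix} = \dim(\Ima C + \Ima V_p)$, while $\rank(V_p) = \dim \Ima V_p$. Because $\Ima V_p \subseteq \Ima C + \Ima V_p$ unconditionally, the assumed equality of the two ranks forces $\Ima C + \Ima V_p = \Ima V_p$, i.e.\ $\Ima C \subseteq \Ima V_p$, so each $c_i = V_p w_i$ for some $w_i \in \mathbb{R}^{r}$. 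This is the only nonmechanical step, and the one place to be careful is the row-versus-column bookkeeping in the block matrix; everything after it is routine.

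With $c_i = V_p w_i$ in hand, each component satisfies $\xi_i = (V_p w_i)^\top \bm{X}_p = w_i^\top (V_p^\top \bm{X}_p)$, which already places $\xi$ in $\chi_p$. To exhibit the observability explicitly, I would then recover $V_p^\top \bm{X}_p$ from the data: from $\bm{\Gamma}_p = U_p S_p V_p^\top$ and the governing relation \cref{eq:thm_linear_system}, left-multiplying $\bm{\Gamma}_p \bm{X}_p = \bm{\Sigma}_p$ by $S_p^{-1} U_p^\top$ (legitimate because $U_p^\top U_p = I_r$ and $S_p$ is invertible) yields $V_p^\top \bm{X}_p = S_p^{-1} U_p^\top \bm{\Sigma}_p$. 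Hence $\xi_i = w_i^\top S_p^{-1} U_p^\top \bm{\Sigma}_p$ is a fixed linear functional of $\bm{\Sigma}_p$ alone; since $\bm{\Sigma}_p$ is assembled entirely from $\bm{H}(\bm{\hat{x}}^{n-1}),\ldots,\bm{H}(\bm{\hat{x}}^{n-p})$ and $\bm{\delta}^{n},\ldots,\bm{\delta}^{n-p}$, the quantity $\xi$ is computable from the $p$-time-delayed information, as claimed. The whole argument is linear-algebraic, and I anticipate the main obstacle to be nothing deeper than stating the rank identity through row spaces correctly — once that is done the SVD inversion closes the proof immediately.
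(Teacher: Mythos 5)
Your proposal is correct and follows essentially the same route as the paper: both arguments convert the rank hypothesis into the containment $\Ima C \subseteq \Ima V_p$ (the paper by transposing the stacked matrix into $\rank(\begin{bmatrix} C & V_p \end{bmatrix}) = \rank(V_p)$, you by identifying the row space of the stacked matrix with $\Ima C + \Ima V_p$ — the same bookkeeping) and then conclude $\xi \in \chi_p$ directly from the definition. Your closing step, recovering $V_p^\top \bm{X}_p = S_p^{-1} U_p^\top \bm{\Sigma}_p$ so that $\xi$ is exhibited as an explicit linear functional of the delayed data, goes slightly beyond the paper's two-line proof, which leaves the data-computability of elements of $\chi_p$ implicit in the definition; this is a harmless and arguably clarifying addition, not a different method.
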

\begin{proof}
$\because \rank(\begin{bmatrix}
C^\top \\
V_p^\top 
\end{bmatrix}) = \rank(\begin{bmatrix}
C & 
V_p 
\end{bmatrix}) = \rank(V^\top_p) = \rank(V_p)$ $\therefore C \subset \Ima V_p.$ 

$\therefore \xi = C^\top \bm{X}_p \in \chi_p.$ 
\end{proof}

Given the above lemma, one can obtain a rank test criterion in \cref{thm:rank_test_dual_linear} for whether the closure dynamics of a nonlinear system with dual linear closure can be determined with $p$ time delayed observable space. Furthermore, analysis of the rank of the augmented matrix provides further insights into the role of time delay in observation.

\begin{theorem}{\label{thm:rank_test_dual_linear}}
A nonlinear dynamical system with dual linear closure with $p = N-Q-1$ will satisfy the following rank test
\begin{gather*}
\rank(V_p) = \rank(\begin{bmatrix}
C^\top \\ 
V_p^\top 
\end{bmatrix}),
\end{gather*}
where $C^\top = \begin{bmatrix}
A_{12}A_{22} & \bm{0}
\end{bmatrix}$, and the closure dynamics is observable from $p$ time delayed observable space, i.e., can be determined as a linear combination of $\bm{H}(\bm{\hat{x}}^{n-1}),\ldots,\bm{H}(\bm{\hat{x}}^{n-p})$, $\bm{\delta}^n,\ldots,\bm{\delta}^{n-p}$. 
Furthermore, the minimal number of previous steps $p$ that satisfies the above condition can be found through
$$
p_{*} = \min \Pi_{\mathcal{O}} - 1,
$$
where
$$\Pi_{\mathcal{O}} = \{ l | l \in \mathbb{N}^{+}, r_{\mathcal{O}}(l) = r_{\mathcal{O}}(l+1) \}.
$$
\end{theorem}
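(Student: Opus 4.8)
The plan is to reduce the SVD-based rank test of \cref{lem:partial_obsv} to a statement about the null space of $\bm{\Gamma}_p$, and then to resolve that statement using the Krylov/observability structure already exposed in the proof of \cref{thm:recover_linear}. First I would observe that the unknown piece of the exact closure dynamics is precisely $A_{12}A_{22}\bm{\tilde{x}}^{n} = C^\top \bm{X}_p$ with $C^\top = \begin{bmatrix} A_{12}A_{22} & \bm{0} \end{bmatrix}$, so the quantity whose observability must be certified is $\xi = C^\top \bm{X}_p$. Since $\Ima V_p$ is exactly the row space of $\bm{\Gamma}_p$ (from $\bm{\Gamma}_p = U_pS_pV_p^\top$ with $U_pS_p$ of full column rank), the rank condition $\rank(V_p)=\rank\left(\left[\begin{smallmatrix}C^\top\\ V_p^\top\end{smallmatrix}\right]\right)$ is equivalent to the inclusion $\ker \bm{\Gamma}_p \subseteq \ker C^\top$. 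Thus the whole theorem becomes: show that every $\bm{X}_p$ annihilated by $\bm{\Gamma}_p$ is also annihilated by $C^\top$.

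Next I would characterize $\ker \bm{\Gamma}_p$ explicitly. Writing $\Phi := I + \Delta t A_{22}$, the evolution block rows of $\bm{\Gamma}_p$ force $\bm{\tilde{x}}^{n-j} = \Phi^{\,p-j} w$ for $w := \bm{\tilde{x}}^{n-p}$, while the projection block rows force $A_{12}\Phi^{k} w = 0$ for $k = 0,\ldots,p$. Using the identity $A_{12}A_{22} = \tfrac{1}{\Delta t}\bigl(A_{12}\Phi - A_{12}\bigr)$, the quantity to control collapses to
\begin{equation*}
C^\top \bm{X}_p = A_{12}A_{22}\Phi^{p} w = \tfrac{1}{\Delta t}\bigl(A_{12}\Phi^{p+1} - A_{12}\Phi^{p}\bigr) w = \tfrac{1}{\Delta t}\,A_{12}\Phi^{p+1} w,
\end{equation*}
where the last step uses $A_{12}\Phi^{p} w = 0$. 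Hence the rank test holds iff $A_{12}\Phi^{p+1}w = 0$ whenever $A_{12}\Phi^{k}w = 0$ for all $k \le p$, i.e.\ iff the rows of $A_{12}\Phi^{p+1}$ lie in $\spn\{A_{12}\Phi^{0},\ldots,A_{12}\Phi^{p}\}$. Invoking the rank identity from \cref{thm:recover_linear} (that the $\Phi$-Krylov block and the $A_{22}$-Krylov block $\mathcal{O}_{p+1}$ share the same rank and row space), this is exactly the condition $r_{\mathcal{O}}(p+1) = r_{\mathcal{O}}(p+2)$, i.e.\ $p+1 \in \Pi_{\mathcal{O}}$.

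It then remains to translate this into the two assertions of the theorem. For the sufficiency of $p = N-Q-1$, I would run the standard monotone--bounded argument on $r_{\mathcal{O}}$: it is non-decreasing, bounded by $N-Q$, and strictly increases by at least one until it stabilizes, so $\min\Pi_{\mathcal{O}} \le N-Q$ and therefore $p = N-Q-1 \ge \min\Pi_{\mathcal{O}} - 1$; once the chain of row spaces has stabilized it stays stabilized (if $A_{12}\Phi^{l}$ is spanned by lower powers, multiplying by $\Phi$ on the right propagates this to all higher powers), which gives $r_{\mathcal{O}}(p+1)=r_{\mathcal{O}}(p+2)$ and hence the rank test. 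For minimality, taking $p = p_{*}-1$ so that $p+1 = \min\Pi_{\mathcal{O}} - 1 \notin \Pi_{\mathcal{O}}$, the strict inclusion $\spn\{A_{12}\Phi^{0},\ldots,A_{12}\Phi^{p}\}\subsetneq \spn\{A_{12}\Phi^{0},\ldots,A_{12}\Phi^{p+1}\}$ guarantees a vector $w$ with $A_{12}\Phi^{k}w=0$ for $k\le p$ but $A_{12}\Phi^{p+1}w \ne 0$; the associated $\bm{X}_p$ lies in $\ker\bm{\Gamma}_p$ yet has $C^\top \bm{X}_p \ne 0$, breaking the rank test, so no smaller $p$ works and $p_{*} = \min\Pi_{\mathcal{O}}-1$. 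The main obstacle I anticipate is the bookkeeping that links the three equivalent formulations --- the SVD rank test of \cref{lem:partial_obsv}, the null-space inclusion $\ker\bm{\Gamma}_p \subseteq \ker C^\top$, and the Krylov span condition --- cleanly enough that the stabilization argument on $r_{\mathcal{O}}$ does all the real work; in particular I must be careful that passing between powers of $A_{22}$ and powers of $\Phi = I + \Delta t A_{22}$ preserves both rank and span, which is where the identity inherited from \cref{thm:recover_linear} is essential.
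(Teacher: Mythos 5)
Your proposal is correct, and it arrives at the paper's pivotal characterization --- the rank test holds precisely when $r_{\mathcal{O}}(p+1)=r_{\mathcal{O}}(p+2)$ --- but by a dual route rather than the paper's primal one. The paper appends the row block $C^\top$ to $\bm{\Gamma}_p$ and repeats the block row elimination from the proof of \cref{thm:recover_linear}, obtaining $p(N-Q)+\rank(\mathcal{O}_{p+2})$ for the rank of the augmented matrix; comparing with $\rank(\bm{\Gamma}_p)=p(N-Q)+\rank(\mathcal{O}_{p+1})$ gives the same stabilization condition, after which the case $p=N-Q-1$ is settled by the Cayley--Hamilton theorem ($A_{22}^{N-Q}$ is a linear combination of lower powers) and minimality is asserted by a one-line appeal to monotonicity of $r_{\mathcal{O}}$. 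You instead dualize the test of \cref{lem:partial_obsv} into the kernel inclusion $\ker\bm{\Gamma}_p\subseteq\ker C^\top$, parametrize $\ker\bm{\Gamma}_p$ by a single vector $w$ satisfying $A_{12}\Phi^{k}w=0$ for $k\le p$ (with $\Phi=I+\Delta t A_{22}$), and collapse $C^\top\bm{X}_p$ to $\tfrac{1}{\Delta t}A_{12}\Phi^{p+1}w$. What your route buys: the minimality half becomes constructive --- an explicit kernel vector witnesses failure of the rank test for every $p<p_*$, which is genuinely stronger than the paper's closing remark --- and your pigeonhole/stabilization bound replaces Cayley--Hamilton with a more elementary argument. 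What the paper's route buys: the proof is a short delta on \cref{thm:recover_linear}, reusing the elimination verbatim. Do keep the caveat you flagged: your kernel argument needs the $\Phi$-Krylov stack and the $A_{22}$-stack $\mathcal{O}_{p+1}$ to share the same \emph{row space}, not merely the same rank; this holds because the binomial change-of-basis matrix connecting the two stacks is block triangular with invertible diagonal blocks $\Delta t^{k}I$, and it is the one place where you use strictly more than the rank identity quoted from the paper.
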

\begin{proof}
To determine $\bm{\delta}^{n+1}$, $A_{12}A_{22}\bm{\tilde{x}}^{n}$ has to be in the $p$ time delayed observable space.
$\because \bm{\tilde{x}}^{n} =  \begin{bmatrix}
A_{12}A_{22} & \bm{0}
\end{bmatrix} \bm{X}_p. \therefore$ from \cref{lem:partial_obsv}, if 
\begin{gather*}
\rank(V_p) = \rank(\begin{bmatrix}
A_{12}A_{22} \quad \bm{0} \\ 
V_p^\top 
\end{bmatrix}),
\end{gather*}
then $\bm{\tilde{x}}^{n}$ is $p$ time delayed linear observable. Since $V_p^\top$ shares the same independent row space as $\bm{\Gamma}$, augmenting $\bm{\Gamma}_p$ with $C^\top$ will result in the same rank. 
\begin{align*}
\begin{bmatrix}
\bm{\Gamma}_p \\
C^\top
\end{bmatrix}
& = 
\begin{bmatrix}
I & -(I+\Delta t A_{22})  & \ldots & \bm{0} \\
\bm{0} & I & -(I+\Delta t A_{22}) & \ldots  \\
\vdots & \vdots & \vdots & \vdots \\
\bm{0} & \ldots & I & -(I+\Delta t A_{22})  \\
A_{12} & \bm{0} & \bm{0} & \ldots \\
\bm{0} & A_{12} & \bm{0} & \ldots \\
\vdots & \vdots & \vdots & \vdots \\
\bm{0} & \bm{0} &  \ldots  & A_{12} \\
A_{12}A_{22} & \bm{0} &  \ldots & \bm{0}
\end{bmatrix} \\
& \rightarrow 
\begin{bmatrix}
I & -(I+\Delta t A_{22})  & \ldots & \bm{0} \\
\bm{0} & I & -(I+\Delta t A_{22}) & \ldots  \\
\vdots & \vdots & \vdots & \vdots \\
\bm{0} & \bm{0} & I & -(I+\Delta t A_{22})  \\
\bm{0} & \bm{0} &  \ldots  & A_{12}(I+\Delta t A_{22})^p \\
\bm{0} & \bm{0} & \ldots & A_{12}(I+\Delta t A_{22})^{p-1} \\
\vdots & \vdots & \vdots & \vdots \\
\bm{0} & \bm{0} &  \ldots  & A_{12} \\
\bm{0} & \bm{0} &  \ldots  & A_{12}A_{22}(I+\Delta t A_{22})^p
\end{bmatrix} \\
& \rightarrow \rank(\begin{bmatrix}
\bm{\Gamma}_p \\
C^\top
\end{bmatrix} ) = p(N-Q) + \rank(\begin{bmatrix}
A_{12} A_{22}^p \\
A_{12} A_{22}^{p-1} \\
\vdots \\
A_{12} \\
A_{12} A_{22}^{p+1}
\end{bmatrix}) = p(N-Q) + \rank(\mathcal{O}_{p+2}) \\ 
& \rightarrow \rank(\begin{bmatrix}
\bm{\Gamma}_p \\
C^\top
\end{bmatrix} ) = \rank(\bm{\Gamma}_p ) \rightarrow \rank(\mathcal{O}_{p+2}) = \rank(\mathcal{O}_{p+1}).
\end{align*}
$\because$ Recall $r_{\mathcal{O}}(\cdot)$ is a monotonic integer function and from the Cayley Hamilton theorem, $A_{22}^{N-Q}$ is linearly dependent on $\{ I, A_{22},\ldots, A_{22}^{N-Q-1} \} $ $\therefore \forall p \ge N-Q-1$, $\rank(\mathcal{O}_{p+2}) = \rank(\mathcal{O}_{p+1})$.
Correspondingly the minimal number of previous steps that satisfies the rank test can be defined as the minimal integer that satisfies the $\rank(\mathcal{O}_{p+2}) = \rank(\mathcal{O}_{p+1})$. 
\begin{equation}
p_{*} = \min \Pi_{\mathcal{O}} - 1,
\end{equation}
where
\begin{equation}
\Pi_{\mathcal{O}} = \{ l | l \in \mathbb{N}^{+}, r_{\mathcal{O}}(l) = r_{\mathcal{O}}(l+1) \}.
\end{equation}
Because of the monotonicity of integer function $r_{\mathcal{O}}(\cdot)$, $p_{*}$ can be found in a sequential sense.
\end{proof}

The fact that one can determine the closure dynamics of any nonlinear system with dual linear closure given $\emph{all}$ previous resolved states is not surprising. It will be shown shortly that this is possible for a slightly more general case. 
\begin{proposition}
Closure dynamics of any nonlinear dynamical system with dual linear closure can be determined as a linear combination of $\bm{\hat{x}}^{n-1},\ldots,\bm{\hat{x}}^{n-p}$, $\bm{\delta}^n,\ldots,\bm{\delta}^{n-p}$, with $p \le N-Q-1$.
\end{proposition}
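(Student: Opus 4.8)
The plan is to reduce the proposition to \cref{thm:rank_test_dual_linear} together with the Cayley--Hamilton bound on the memory length that is already established there. First I would write the exact discrete closure dynamics
\[
\frac{\bm{\delta}^{n+1} - \bm{\delta}^{n}}{\Delta t} = A_{12}A_{22}\bm{\tilde{x}}^{n} + A_{12}\bm{H}(\bm{\hat{x}}^{n}),
\]
and note that the second term is an explicit, prescribed function of the \emph{current} resolved state; hence the whole task of determining the closure dynamics collapses to recovering the single hidden quantity $A_{12}A_{22}\bm{\tilde{x}}^{n}$.

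Next I would apply \cref{thm:rank_test_dual_linear} with $C^\top = \begin{bmatrix} A_{12}A_{22} & \bm{0}\end{bmatrix}$. The row-reduction of $\bm{\Gamma}_p$ and the Cayley--Hamilton argument carried out in its proof show that the rank test $\rank(V_p) = \rank\big(\begin{bmatrix} C^\top \\ V_p^\top \end{bmatrix}\big)$ is equivalent to $\rank(\mathcal{O}_{p+2}) = \rank(\mathcal{O}_{p+1})$, and therefore holds for every $p \ge p_* = \min \Pi_{\mathcal{O}} - 1$. Moreover $p_* \le N-Q-1$, because $N-Q \in \Pi_{\mathcal{O}}$: since $A_{22}^{\,N-Q}$ is a linear combination of $I,A_{22},\ldots,A_{22}^{\,N-Q-1}$, appending $A_{12}A_{22}^{\,N-Q}$ cannot raise the rank of $\mathcal{O}_{N-Q}$. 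By \cref{lem:partial_obsv} this places $A_{12}A_{22}\bm{\tilde{x}}^{n}$ inside the $p$-time delayed observable space $\chi_p$ for every admissible $p$; in particular $p = N-Q-1$ is admissible, which is exactly the range claimed.

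It then remains to convert membership in $\chi_p$ into an explicit linear combination. Since $\Ima V_p$ is the row space of $\bm{\Gamma}_p$, the inclusion $C^\top \in \Ima V_p$ means $C^\top = w^\top \bm{\Gamma}_p$ for some weight vector $w$, so that
\[
A_{12}A_{22}\bm{\tilde{x}}^{n} = C^\top \bm{X}_p = w^\top \bm{\Gamma}_p \bm{X}_p = w^\top \bm{\Sigma}_p,
\]
which is a linear combination of the entries of $\bm{\Sigma}_p$, namely of $\bm{H}(\bm{\hat{x}}^{n-1}),\ldots,\bm{H}(\bm{\hat{x}}^{n-p})$ and $\bm{\delta}^{n},\ldots,\bm{\delta}^{n-p}$ (the $\Delta t$ factors being absorbed into $w$). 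Because $\bm{H}$ is a prescribed, known function of the resolved state, each $\bm{H}(\bm{\hat{x}}^{n-i})$ is computable directly from $\bm{\hat{x}}^{n-i}$, so the closure dynamics is fully determined by $\bm{\hat{x}}^{n-1},\ldots,\bm{\hat{x}}^{n-p},\bm{\delta}^{n},\ldots,\bm{\delta}^{n-p}$ with $p\le N-Q-1$.

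I expect the only delicate point to be the reconciliation of language in this last step: when $\bm{H}$ is genuinely nonlinear, the dependence on the resolved states is not a literal linear combination of the vectors $\bm{\hat{x}}^{n-i}$, but a linear combination of the known features $\bm{H}(\bm{\hat{x}}^{n-i})$. I would therefore state explicitly that ``linear combination of $\bm{\hat{x}}^{n-i}$'' is understood as the weights $w$ acting on these prescribed (possibly nonlinear) functions of the resolved states together with the $\bm{\delta}^{n-i}$, so that the assertion is equivalent to \cref{thm:rank_test_dual_linear} and requires no new observability computation beyond verifying $p_* \le N-Q-1$.
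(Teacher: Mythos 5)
Your proposal is correct and follows essentially the same route as the paper: the proposition is presented there as an immediate consequence of \cref{thm:rank_test_dual_linear}, whose proof already contains both the reduction of the closure dynamics to the observability of $A_{12}A_{22}\bm{\tilde{x}}^{n}$ (via \cref{lem:partial_obsv} with $C^\top = \begin{bmatrix} A_{12}A_{22} & \bm{0}\end{bmatrix}$) and the Cayley--Hamilton argument giving $p_{*} = \min \Pi_{\mathcal{O}} - 1 \le N-Q-1$. Your two additions---the explicit conversion of membership in $\chi_p$ into the linear combination $w^\top \bm{\Sigma}_p$, and the caveat that the ``linear combination'' acts on the known features $\bm{H}(\bm{\hat{x}}^{n-i})$ (as the theorem states) rather than literally on the raw states $\bm{\hat{x}}^{n-i}$ (as the proposition loosely says)---merely make explicit what the paper leaves implicit, and are both sound.
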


As a trivial observation, if we replace closure with $\bm{\tilde{x}}^{n}$, one can easily obtain the following rank test as a criterion.
\begin{proposition}
For a nonlinear dynamical system with dual linear closure, if
\begin{gather*}
\rank(V_p) = \rank(\begin{bmatrix}
C^\top \\ 
V_p^\top 
\end{bmatrix}),
\end{gather*}
where $C^\top = \begin{bmatrix}
I_{N-Q \times N-Q} & \bm{0}
\end{bmatrix}$,
then $\bm{\tilde{x}}^{n}$ is observable from a $p$ time delayed observable space. 
\end{proposition}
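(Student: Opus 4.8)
The plan is to derive this proposition as an immediate specialization of \cref{lem:partial_obsv}, applied with the target quantity $\xi = \bm{\tilde{x}}^{n}$ and $q = N-Q$. First I would note that in the stacked unknown vector of \cref{eq:thm_linear_system}, $\bm{X}_p = [(\bm{\tilde{x}}^{n})^\top, (\bm{\tilde{x}}^{n-1})^\top, \ldots, (\bm{\tilde{x}}^{n-p})^\top]^\top \in \mathbb{R}^{(p+1)(N-Q)}$, the present unresolved state $\bm{\tilde{x}}^{n}$ occupies exactly the topmost $(N-Q)$-dimensional block. Hence the selector $C^\top = \begin{bmatrix} I_{N-Q \times N-Q} & \bm{0} \end{bmatrix}$, with $\bm{0}$ of size $(N-Q)\times p(N-Q)$, satisfies $C^\top \bm{X}_p = \bm{\tilde{x}}^{n}$ identically, which is precisely the form of $C^\top$ assumed in the statement.

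With this identification in hand, I would simply invoke \cref{lem:partial_obsv}: the lemma's hypothesis, $\rank(V_p) = \rank(\begin{bmatrix} C^\top \\ V_p^\top \end{bmatrix})$, is verbatim the rank test postulated here, so the lemma guarantees $\bm{\tilde{x}}^{n} = C^\top \bm{X}_p \in \chi_p$. By \cref{def:time_delay_obsv} this means $\bm{\tilde{x}}^{n}$ is observable from the $p$-time delayed observable space, i.e., recoverable as a linear combination of $\bm{H}(\bm{\hat{x}}^{n-1}),\ldots,\bm{H}(\bm{\hat{x}}^{n-p})$ and $\bm{\delta}^{n},\ldots,\bm{\delta}^{n-p}$, as claimed.

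There is no real obstacle here; as the surrounding text already flags, this is a trivial observation obtained by swapping the target in \cref{thm:rank_test_dual_linear} from the closure increment $A_{12}A_{22}\bm{\tilde{x}}^{n}$ (there $C^\top = \begin{bmatrix} A_{12}A_{22} & \bm{0}\end{bmatrix}$) to the full unresolved state $\bm{\tilde{x}}^{n}$ (here $C^\top = \begin{bmatrix} I_{N-Q\times N-Q} & \bm{0}\end{bmatrix}$). The only thing worth checking is the bookkeeping of dimensions, namely that $C^\top$ has shape $(N-Q)\times(p+1)(N-Q)$ and that its identity block lines up with the $\bm{\tilde{x}}^{n}$ entries of $\bm{X}_p$ under the ordering fixed in \cref{eq:thm_linear_system}; both are immediate. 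No row reduction, Cayley--Hamilton argument, or minimal-delay computation is required, since unlike the earlier theorems the statement only certifies observability via the rank condition furnished directly by the lemma rather than producing an explicit $p_{*}$.
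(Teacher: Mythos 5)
Your proposal is correct and follows exactly the route the paper intends: the paper states this proposition as a ``trivial observation'' with no separate proof, precisely because it is the specialization of \cref{lem:partial_obsv} to $\xi = \bm{\tilde{x}}^{n}$ with the selector $C^\top = \begin{bmatrix} I_{N-Q \times N-Q} & \bm{0} \end{bmatrix}$, which is what you carry out (including the dimension bookkeeping that $C^\top \bm{X}_p = \bm{\tilde{x}}^{n}$ under the ordering of \cref{eq:thm_linear_system}). Your closing remark is also apt: no row reduction or Cayley--Hamilton argument is needed here, since the proposition asserts only the rank-test criterion and not a minimal delay $p_{*}$.
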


The key ingredient of \cref{thm:recover_linear} is the exploitationo of \emph{projection} equations in the \emph{dual linear} closure setting, which may be overlooked since they share the same information as previous observables in the statistical sense if the initial condition is fully known. Without the explicit usage of \emph{projection} equations, one can obtain a closure with explicit memory dependence on \emph{all} previous observables, but is correspondingly applicable to a more general system stated in \cref{def:general_ds_linear_closure}.

\begin{definition}[Nonlinear dynamical system with linear closure]{\label{def:general_ds_linear_closure}}
A nonlinear dynamical system with linear closure is defined as
\begin{equation}
\dfrac{d}{dt}
\begin{bmatrix}
\textcolor{black}{\bm{\hat x}}        \\
\textcolor{black}{\bm{\tilde x}}
\end{bmatrix}
=
\begin{bmatrix}
\bm{F}(\bm{\hat{x}},\bm{\tilde{x}})\\
\bm{H}(\bm{\hat{x}}) +  A_{22}\bm{\tilde{x}}
\end{bmatrix},
\end{equation}
where $\bm{\hat{x}} \in \mathbb{R}^{Q}$, $\bm{\tilde{x}} \in \mathbb{R}^{N-Q}$,  $\bm{F(\cdot)}: \mathbb{R}^{N} \mapsto \mathbb{R}^{Q}$ and $\bm{H(\cdot)}: \mathbb{R}^{Q} \mapsto \mathbb{R}^{N-Q}$, $A_{22} \in \mathbb{R}^{N-Q \times N-Q}$ with $\bm{\delta} = A_{12}\bm{\tilde{x}}$ and $Q \in \mathbb{N}$, $Q<N$.
\end{definition}

\begin{corollary}
With only evolution equations, one can write the following equation for a first order forward discretized dynamical system
$\forall n, p \in \mathbb{N}$, $n > p$
$$ 
\bm{\tilde{x}}^{n} = (I+\Delta t A_{22})^{p} \bm{\tilde{x}}^{n-p} + \sum_{l=0}^{p-1}\Delta t (I + \Delta t A_{22})^{l} \bm{H}( \bm{\hat{x}}^{n-l-1}),
$$  
which links the unresolved states between any two time instances.
\end{corollary}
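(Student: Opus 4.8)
The plan is to recognize that, under \cref{def:general_ds_linear_closure}, the first-order forward discretization of the unresolved block is a linear inhomogeneous recurrence with a constant coefficient matrix, and then to solve it in closed form by iterating the one-step map (equivalently, by induction on $p$). First I would write out the one-step evolution equation: discretizing $d\bm{\tilde{x}}/dt = \bm{H}(\bm{\hat{x}}) + A_{22}\bm{\tilde{x}}$ with the forward scheme gives
$$
\bm{\tilde{x}}^{n} = (I + \Delta t A_{22})\,\bm{\tilde{x}}^{n-1} + \Delta t\,\bm{H}(\bm{\hat{x}}^{n-1}).
$$
This single relation is the only ingredient needed; crucially, no \emph{projection} equation involving $A_{12}$ or $\bm{\delta}$ enters, which is exactly what the qualifier ``with only evolution equations'' signals and what makes the corollary applicable to the more general system of \cref{def:general_ds_linear_closure}.

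Writing $B \triangleq I + \Delta t A_{22}$, I would then argue by induction on $p$. The base case $p=1$ is precisely the one-step recurrence above, since the $l=0$ summand is $\Delta t\,B^{0}\,\bm{H}(\bm{\hat{x}}^{n-1})$. For the inductive step, assuming
$$
\bm{\tilde{x}}^{n} = B^{p}\,\bm{\tilde{x}}^{n-p} + \sum_{l=0}^{p-1} \Delta t\, B^{l}\,\bm{H}(\bm{\hat{x}}^{n-l-1}),
$$
I would substitute the one-step recurrence evaluated at index $n-p$, namely $\bm{\tilde{x}}^{n-p} = B\,\bm{\tilde{x}}^{n-p-1} + \Delta t\,\bm{H}(\bm{\hat{x}}^{n-p-1})$, into the leading term. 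This yields $B^{p+1}\,\bm{\tilde{x}}^{n-p-1}$ together with the extra forcing contribution $\Delta t\,B^{p}\,\bm{H}(\bm{\hat{x}}^{n-p-1})$, which is exactly the $l=p$ term (its argument shift $n-l-1$ equals $n-p-1$). Absorbing it extends the summation range to $l=0,\dots,p$ and reproduces the claimed formula with $p$ replaced by $p+1$, closing the induction.

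The only point demanding care is the re-indexing bookkeeping in the inductive step: one must check that the newly produced term carries the power $B^{p}$ and the argument $\bm{\hat{x}}^{n-p-1}$ so that it merges cleanly into the sum as the $l=p$ summand. I expect no genuine obstacle, because the dynamics is linear in $\bm{\tilde{x}}$ with $\bm{H}(\bm{\hat{x}})$ acting purely as an exogenous forcing, so the discrete variation-of-constants (Duhamel) formula applies verbatim and the identity is simply the $p$-fold telescoping of the one-step propagator $B$.
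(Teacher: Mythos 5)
Your proof is correct, and the underlying algebra---eliminating the intermediate unresolved states $\bm{\tilde{x}}^{n-1},\dots,\bm{\tilde{x}}^{n-p+1}$ by repeated substitution of the one-step forward-Euler recurrence $\bm{\tilde{x}}^{m} = (I+\Delta t A_{22})\bm{\tilde{x}}^{m-1} + \Delta t\,\bm{H}(\bm{\hat{x}}^{m-1})$---is exactly what the paper does. The difference is one of framing rather than substance: the paper stacks the $p$ evolution equations into a single block linear system $\bm{\Gamma}^{e}_p \bm{X}_p = \bm{\Sigma}^{e}_p$ and derives the identity by row operations on the first block row of $\bm{\Gamma}^{e}_p$, deliberately reusing the matrix machinery ($\bm{X}_p$, $\bm{\Gamma}_p$) set up for \cref{thm:recover_linear} and the rank tests, so that the corollary reads as ``what survives of the observability analysis once the projection equations involving $A_{12}$ are discarded.'' Your route---plain induction on $p$ via the discrete variation-of-constants (Duhamel) formula---is more elementary and self-contained, and your explicit remark that no projection equation is used correctly identifies why the result holds for the more general system of \cref{def:general_ds_linear_closure}. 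What you lose relative to the paper is only the structural connection: the block-matrix presentation makes immediately visible how this corollary relates to (and differs from) the finite-memory results that exploit the projection equations, whereas the induction proof stands alone. Your inductive bookkeeping (the new term carrying $B^{p}$ and argument $\bm{\hat{x}}^{n-p-1}$, merging as the $l=p$ summand) is handled correctly, so there is no gap.
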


\begin{proof}
Considering only the \emph{evolution} equations, one can write the following in matrix form 
\begin{equation}{\label{eq:thm_linear_system_mz}}
\bm{\Gamma}^{e}_p \bm{X}_p
= \bm{\Sigma}^{e}_p,
\end{equation}
where \begin{equation}
\bm{\Gamma}^{e}_{p} = \begin{bmatrix}
I & -(I+\Delta t A_{22})  & \ldots & \bm{0} \\
\bm{0} & I & -(I+\Delta t A_{22}) & \ldots  \\
\vdots & \vdots & \vdots & \vdots \\
\bm{0} & \ldots & I & -(I+\Delta t A_{22})  
\end{bmatrix},
\end{equation}
\begin{equation}
\bm{X }_p = \begin{bmatrix}
\bm{\tilde{x}}^{n} \\
\bm{\tilde{x}}^{n-1} \\
\vdots \\
\vdots \\
\vdots \\
\vdots \\
\bm{\tilde{x}}^{n-p} 
\end{bmatrix}, 
\qquad
\bm{\Sigma}^{e}_p = \begin{bmatrix}
\Delta t \bm{H}( \bm{\hat{x}}^{n-1}) \\
\Delta t \bm{H}(\bm{\hat{x}}^{n-2}) \\
\vdots \\
\Delta t \bm{H} (\bm{\hat{x}}^{n-p}) 
\end{bmatrix}.
\end{equation}

Recall we are interested in $\bm{\hat{x}}^n$, with several row operations on the first row block,

\begin{align}
\bm{\Gamma}^{e}_{p} \rightarrow 
\begin{bmatrix}
I & \bm{0}  & \ldots & -(I+\Delta t A_{22})^{p} \\
\ldots & \ldots & \ldots & \ldots  
\end{bmatrix} ,
\end{align}
and correspondingly
\begin{align}
\bm{\Sigma}^{e}_p \rightarrow 
\begin{bmatrix}
\sum_{l=0}^{p-1}\Delta t (I + \Delta t A_{22})^{l} \bm{H}( \bm{\hat{x}}^{n-l-1}) \\
\vdots 
\end{bmatrix},
\end{align}
Therefore,
\begin{equation}
\bm{\tilde{x}}^{n} = (I+\Delta t A_{22})^{p} \bm{\tilde{x}}^{n-p} + \sum_{l=0}^{p-1}\Delta t (I + \Delta t A_{22})^{l} \bm{H}( \bm{\hat{x}}^{n-l-1}).
\end{equation}
\end{proof}

The implication is that, if $\bm{\tilde{x}}$ is known at one previous time instant, the future of $\bm{\tilde{x}}$ starting from that point is completely determined by $\bm{\hat{x}}$ in a convolutional sense. For example, starting from the initial condition, we have the following result often seen in linear systems theory:

\begin{proposition}
For a nonlinear dynamical system with linear closure, if $\bm{\tilde{x}}^{0}$ is known, one can uniquely determine $\bm{\tilde{x}}^{n}$ in the following 
\begin{equation}
\bm{\tilde{x}}^{n} = (I+\Delta t A_{22})^{n} \bm{\tilde{x}}^{0} + \sum_{l=0}^{n-1}\Delta t (I + \Delta t A_{22})^{l} \bm{H}( \bm{\hat{x}}^{n-l-1}).
\end{equation} 
\end{proposition}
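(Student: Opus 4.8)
The plan is to obtain this proposition as an immediate specialization of the preceding Corollary. That corollary establishes, for any $n, p \in \mathbb{N}$ with $n > p$, the convolutional identity
$$
\bm{\tilde{x}}^{n} = (I+\Delta t A_{22})^{p} \bm{\tilde{x}}^{n-p} + \sum_{l=0}^{p-1}\Delta t (I + \Delta t A_{22})^{l} \bm{H}( \bm{\hat{x}}^{n-l-1}).
$$
Setting $p = n$ collapses the reference time $n-p$ to $0$, so that $\bm{\tilde{x}}^{n-p} = \bm{\tilde{x}}^{0}$ becomes the known initial condition, the prefactor becomes $(I+\Delta t A_{22})^{n}$, and the sum runs from $l=0$ to $n-1$, yielding exactly the claimed expression. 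The only point requiring care is that the Corollary was phrased for the strict inequality $n > p$, whereas the choice $p=n$ sits on the boundary $n=p$; I would first confirm that the underlying derivation still holds there.

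To discharge that boundary case cleanly and to keep the argument self-contained, I would instead give a direct proof by unrolling the one-step evolution recurrence. From the first-order forward discretization, the unresolved state obeys
$$
\bm{\tilde{x}}^{m} = (I+\Delta t A_{22})\,\bm{\tilde{x}}^{m-1} + \Delta t\, \bm{H}( \bm{\hat{x}}^{m-1})
$$
for every $m \ge 1$. Iterating this map from $m=n$ down to $m=1$ (equivalently, a short induction on $n$), each back-substitution of $\bm{\tilde{x}}^{m-1}$ accumulates one additional factor of $(I+\Delta t A_{22})$ on the surviving $\bm{\tilde{x}}$ term and appends one further forced term $(I+\Delta t A_{22})^{l}\,\Delta t\,\bm{H}(\bm{\hat{x}}^{n-l-1})$; after $n$ steps the surviving term is $(I+\Delta t A_{22})^{n}\bm{\tilde{x}}^{0}$, giving the stated formula. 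The base case $n=1$ is precisely the one-step recurrence, and the inductive step is the single substitution above.

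For the uniqueness assertion, I would observe that the one-step recurrence is a deterministic forward map: once $\bm{\tilde{x}}^{0}$ is fixed and the resolved trajectory $\{\bm{\hat{x}}^{j}\}$ is supplied, $\bm{\tilde{x}}^{1}, \bm{\tilde{x}}^{2}, \ldots$ are generated sequentially with no free choices, so the value $\bm{\tilde{x}}^{n}$ is unique. I expect no genuine obstacle in this proof; the only point demanding attention is the bookkeeping of the summation index at the two endpoints (the term $l=0$ picking up $\bm{H}(\bm{\hat{x}}^{n-1})$ and the term $l=n-1$ picking up $\bm{H}(\bm{\hat{x}}^{0})$), which is routine.
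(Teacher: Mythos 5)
Your proposal is correct and follows essentially the same route as the paper: the paper states this proposition with no separate proof, presenting it as an immediate specialization of the preceding corollary (taking $p=n$ so that $\bm{\tilde{x}}^{n-p}=\bm{\tilde{x}}^{0}$). Your observation that the corollary's stated hypothesis $n>p$ strictly excludes the choice $p=n$ is a legitimate technicality the paper glosses over, and your backup argument---unrolling the one-step recurrence $\bm{\tilde{x}}^{m} = (I+\Delta t A_{22})\bm{\tilde{x}}^{m-1} + \Delta t\, \bm{H}(\bm{\hat{x}}^{m-1})$ by induction, which is the elementary analogue of the paper's matrix row-operation derivation of the corollary---cleanly discharges it and also supplies the uniqueness claim.
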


The present framework exploits the fact that although the closure is explicitly based on \emph{all} previous information of the observables, the operator driving this function might only possess a finite memory dependence as indicated in \cref{thm:recover_linear}. Therefore, the essential structure of the closure may be much more compact.

\section{Results - Non-linear systems} \label{sec:non-linear}
In this section, the operator inference framework is used to derive closures for several problems 
ranging from chaotic and non-chaotic
nonlinear ordinary differential equation (ODE) systems to nonlinear partial
differential equations (PDE).

\subsection{Van del Pol system}
\subsubsection{Problem description}
The Van del Pol (VdP) system 
with first order forward discretization is
\begin{equation}{\label{eq:2d_vdp_system_disc}}
    \begin{bmatrix}
    x^{n+1}_1\\ x^{n+1}_2
    \end{bmatrix}
    = 
    \begin{bmatrix}
    x^{n}_1\\ x^{n}_2
    \end{bmatrix}
    + \Delta  t
    \begin{bmatrix}
    x^{n}_2 \\
    \mu (1 - x_1^{n}x_1^{n}) x_2^{n} - x_1^{n}
    \end{bmatrix},
\end{equation}
where $\mu = 2$, $\hat{x}(0) = x_1^{0} = 1$, $\tilde{x}(0) = x_2^{0} = 0$, and $\delta = x_2$. The simulation is run from $t \in [0, 60]$ collecting  $\{ \hat{x}, \delta \}$ as data over 6000 snapshots with a $\Delta t = 0.01$. The first 30\% of data is set as training data and the rest is set for testing.

Consider $\hat{x} =x_1$, $\tilde{x} = x_2$ thus $N =2$ and $Q=1$. Correspondingly, the ROM formulation is given below with linear superposition of multi-time effects assumption
\begin{align}
\hat{x}^{n+1} &= \hat{x}^{n} + \Delta t \delta^n, \\
\delta^{n+1} &= \delta^{n} + \Delta t \sum_{i=0}^{p}\bm{G}_i.
\end{align}

For VdP system, the exact solution for the closure dynamics  with $p=0$ is
\begin{equation}
\dfrac{\delta^{n+1} - \delta^{n}}{\Delta t} = \mu \left( 1-\hat{x}^{n}\hat{x}^{n} \right) \delta^{n} - \hat{x}^{n} = -\hat{x}^n + 2\delta^n -2\hat{x}^n\hat{x}^n \delta^n.
\end{equation}

\subsubsection{Model selection for polynomial regression}
To determine the underlying sparse dynamics, the \emph{lasso} path is computed and presented in \cref{fig:2d_vdp_lasso_path_coef,fig:2d_vdp_lasso_path_nz}. It can be seen that an elbow is present in the error plot as $\lambda$ near $10^{-10}$, where a number of non-zero terms jump above 3 to 9, causing a slight increase in MSE. Thus, the optimal $\lambda$ is chosen as $10^{-10}$ according to the Pareto front. 

\begin{figure}[H]
	\centering
	\includegraphics[scale=0.15]{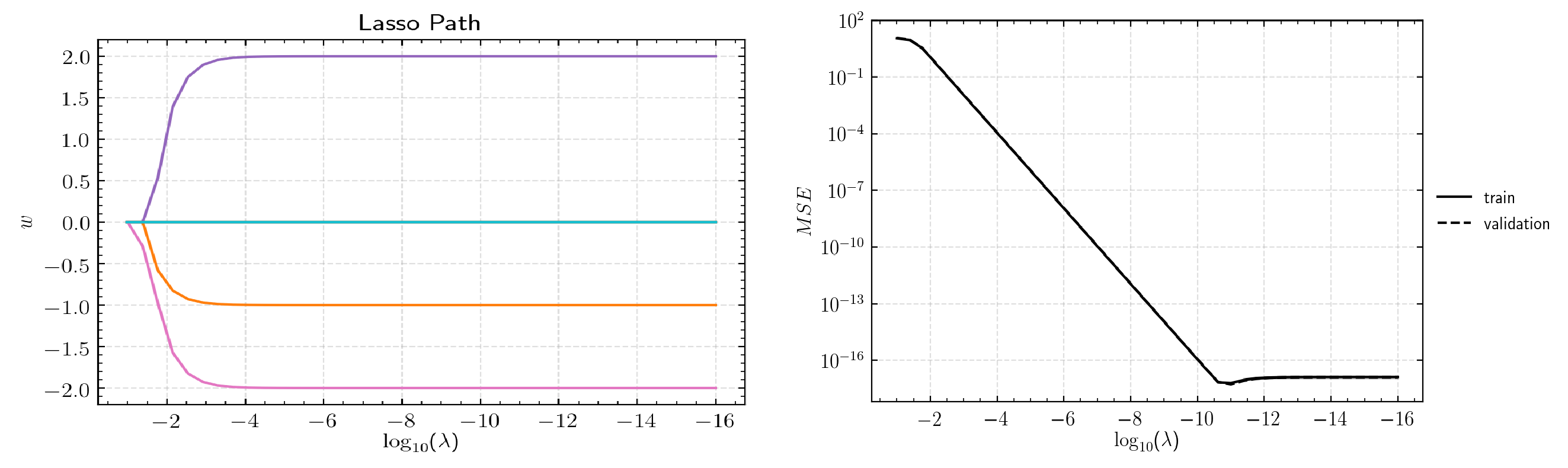}
	\caption{lasso path for 2D VdP system. Left: coefficients. Right: MSE.}
	\label{fig:2d_vdp_lasso_path_coef}
\end{figure}

\begin{figure}[H]
	\centering
	\includegraphics[scale=0.5]{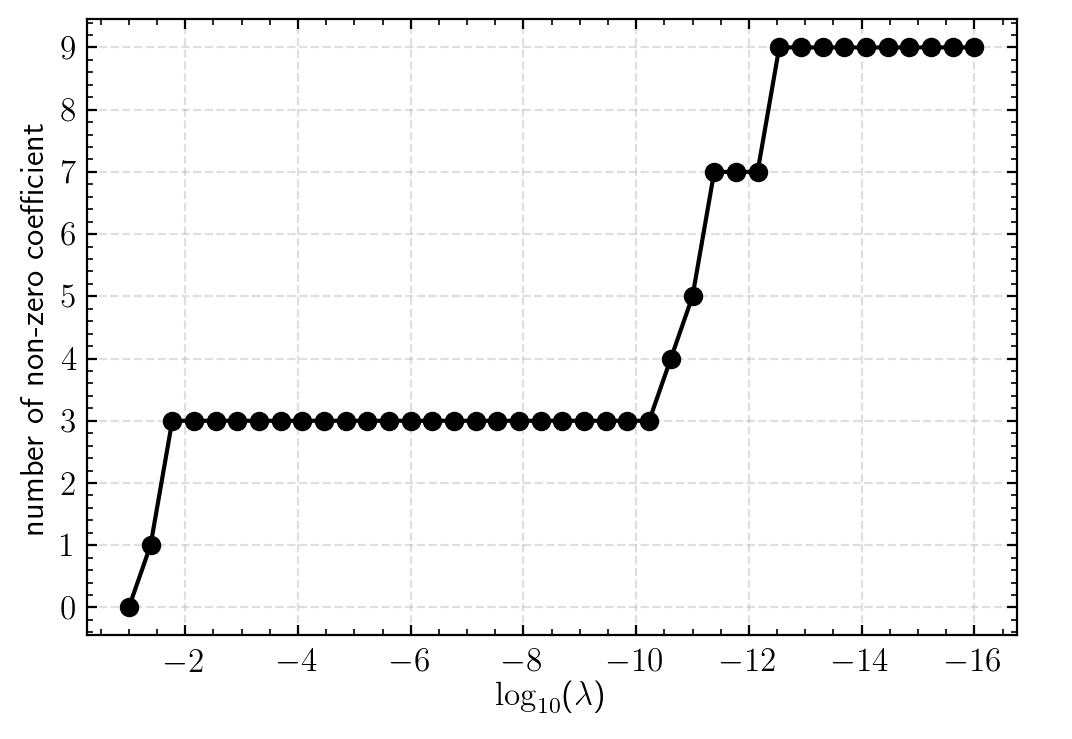}
	\caption{lasso path for 2D VdP system: number of non-zero terms}
	\label{fig:2d_vdp_lasso_path_nz}
\end{figure}

\subsubsection{A posteriori evaluation of model performance}
The corresponding model performance in an a posteriori sense for both training and testing data is excellent, as shown below in \cref{fig:2d_vdp_posteriori}. 

\begin{figure}[H]
	\centering
	\includegraphics[scale=0.15]{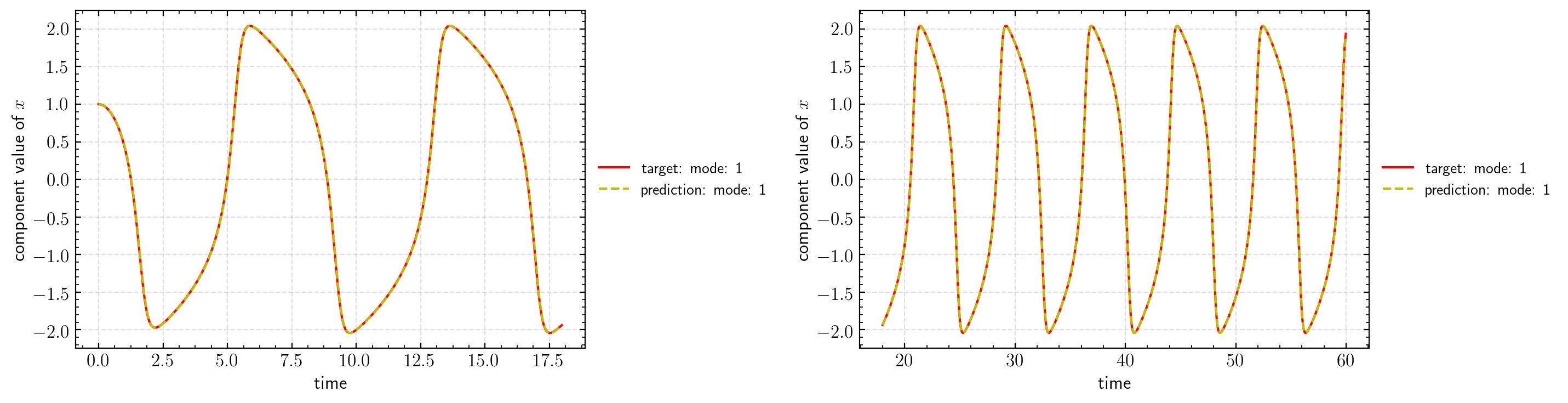}
	\caption{A posteriori model performance on 2D VDP system. Left: training data. Right: testing data.}
	\label{fig:2d_vdp_posteriori}
\end{figure}

\subsection{Duffing Map}
\subsubsection{Problem description}
The Duffing map is a classic example of a chaotic map. We take the form
\begin{align}
x_1^{n+1} &= x_1^{n} + \Delta t (x_2^{n} - x_1^{n}), \\
x_2^{n+1} &= x_2^{n} + \Delta t (-bx_1^{n} + (a-1)x_2^{n} - (x_2^{n})^{3}),
\end{align}
with $a=2.75$ and $b=0.2$, $\Delta t = 1$, $x_1(0) = x_1^{0} = 0.5$, $x_2(0) = x_2^{0} = 0$. The resolved variable $\hat{x} = x_1$. We simulate this system up to 6000 steps with the first 30\% for training, and the rest for testing. For this case, the corresponding closure dynamics for $\delta$ is
\begin{equation}
\delta^{n+1} = a\delta^{n} - (\delta^{n})^3 - b\hat{x}^{n}.
\end{equation}

\subsubsection{Model selection}
As displayed in \cref{fig:2d_duffing_lasso_path,fig:2d_duffing_lasso_path_nz}, by sweeping $\lambda$, an optimal value of $\lambda = 10^{-10}$ is found. At that sparsity level, the resulting expression is given as follows:
\begin{align}
\delta^{n+1} & = \delta^{n} + \Delta t (-0.199999\hat{x}^{n} + 1.749999 \delta^{n} - 0.999999\delta^{n3}  \\ \nonumber & + 2.99\times 10^{-11} \hat{x}^{n2} - 1.34 \times 10^{-8} \hat{x}^{n3}  
  + 2.41\times 10^{-11} \delta^{n}\hat{x}^{n} + 6.81\times 10^{-10} \delta^{n} \hat{x}^{n2}  ).
\end{align}
\begin{figure}[H]
	\centering
	\includegraphics[scale=0.15]{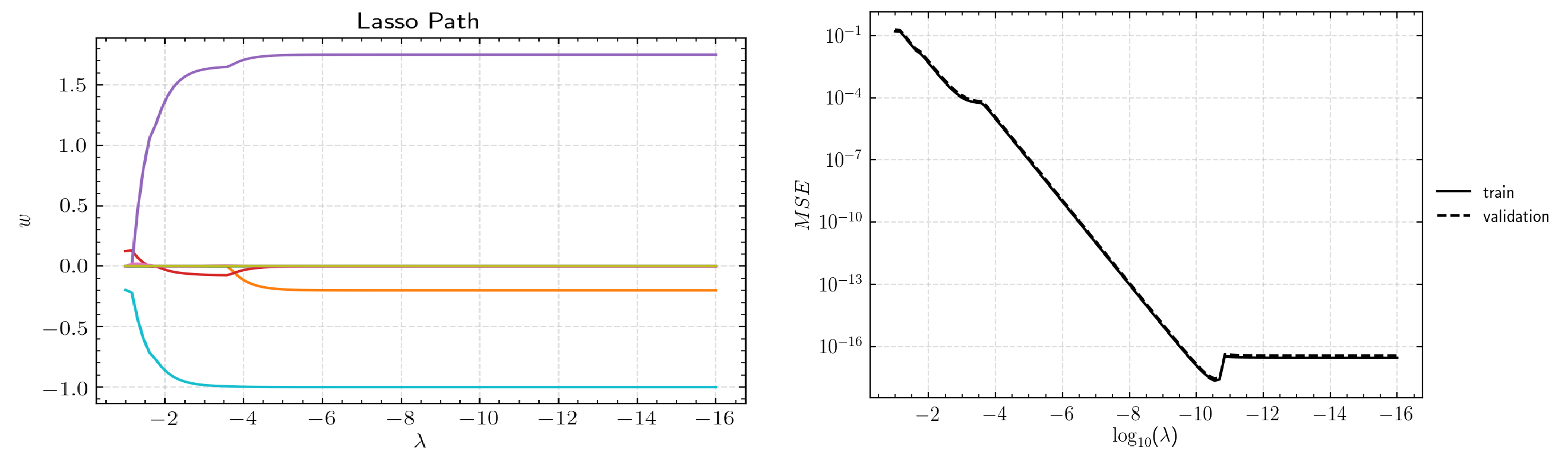}
	\caption{lasso path for 2D Duffing system. Left: coefficient. Right: MSE.}
	\label{fig:2d_duffing_lasso_path}
\end{figure}

\begin{figure}[H]
	\centering
	\includegraphics[scale=0.5]{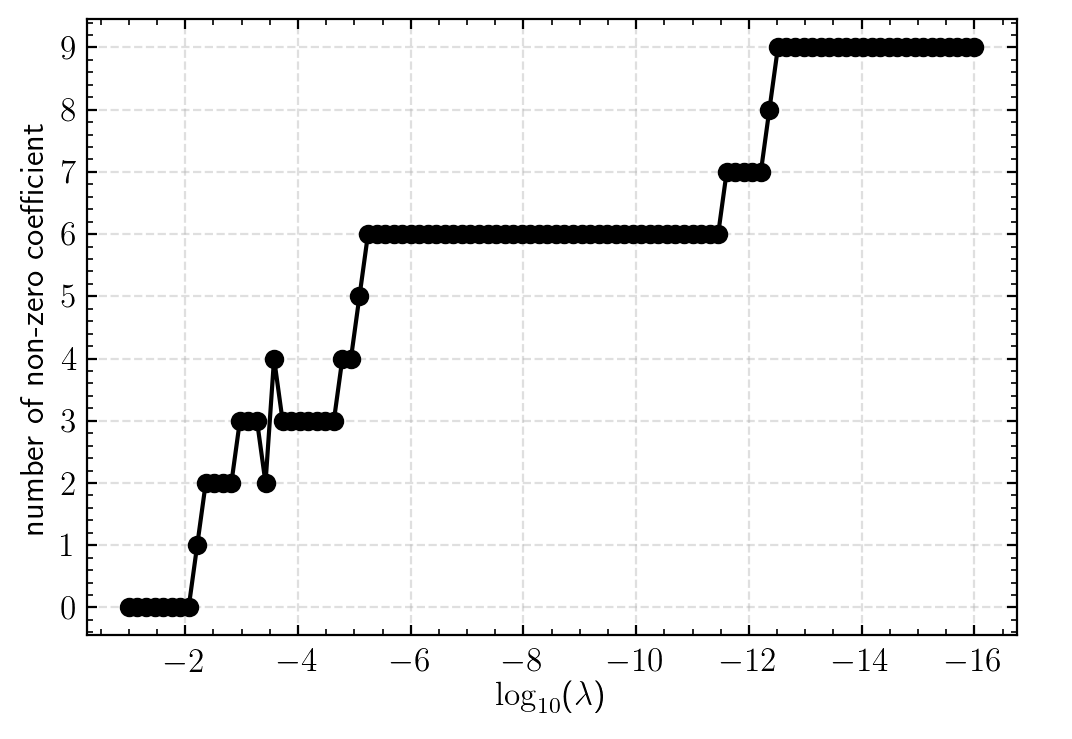}
	\caption{lasso path for 2D Duffing system: number of non-zero terms}
	\label{fig:2d_duffing_lasso_path_nz}
\end{figure}

\subsubsection{A posteriori evaluation of model performance}

For a chaotic system, since it is extremely difficult to achieve accurate long time predictions, models are most often evaluated in a variety of ways. These include subjective visual inspection \cite{han2004prediction} or  measures for the attractor~\cite{aguirre2009modeling} such as maximum Lyapunov exponent \cite{genccay1997nonlinear}, correlation dimension and other time averaged characteristics\cite{lin2003long}. The first approach, although perhaps the most widely used \cite{miyoshi1995learning}\cite{sato1996evolutionary}\cite{trischler2016synthesis}, can sometimes be misleading \cite{diks1996detecting}. 

In this work, we first show there is excellent correspondence in maximum Lyapunov exponent (MLE) and correlation dimension $\gamma$, computed on both the ground truth time series and modeled time series for both training and testing data. Following this, we employ a null hypothesis test proposed by Diks et al.~\cite{diks1996detecting} to show that the attractor reconstructed by embedding the time series predicted by our model is indeed close to the phase space reconstruction of the ground truth within a confidence interval. As suggested by Diks, the null hypothesis that the two delay vectors are drawn from the same multidimensional probability distribution is accepted if $S<3$. Diks criterion has been previously employed as an early stop criterion during the training of neural networks~\cite{Bakker2000}. 

The comparison of the predicted time series between the modeled system and ground truth is displayed in \cref{fig:sindy_2d_duffing_posteriori} for training and testing data. Due to the chaotic nature of the dynamics, direct measurement of the MSE is not suitable for this case. Examination of the MLE and correlation dimension in \cref{tab:df_map_lya_cd} shows excellent agreement. Furthermore, Diks test shows $|S| =  1.003$ for training data and $|S| = 1.588$ for testing data, which further confirms the validity of the model. Details of the implementation of Diks criterion are given in \cref{app:diks_criterion}.

\begin{figure}[H]
	\centering
	\includegraphics[scale=0.15]{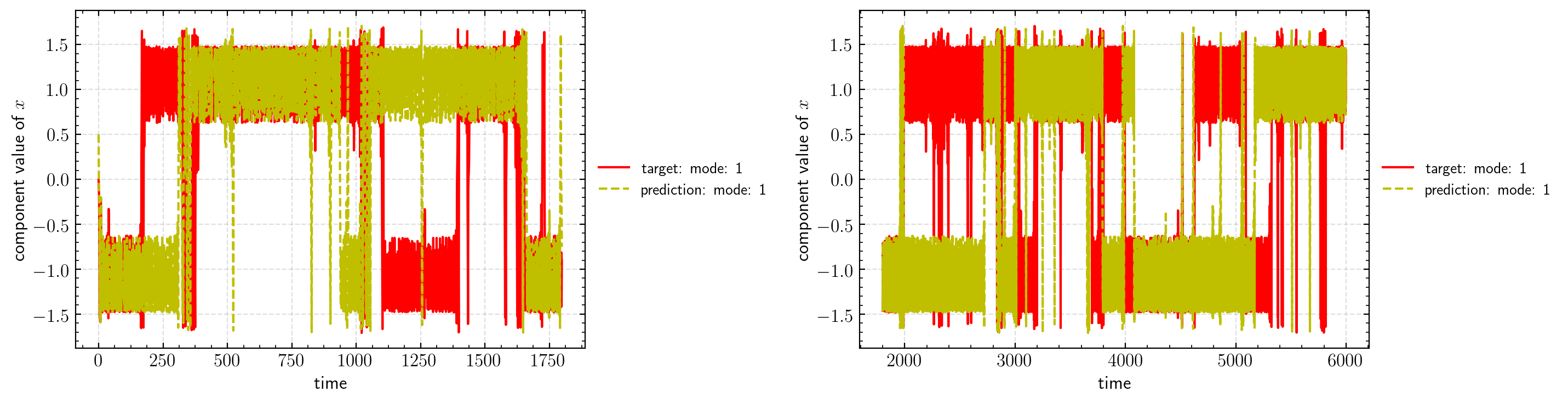}
	\caption{A posteriori model performance on the Duffing map. Left: training data. Right: testing data.}
	\label{fig:sindy_2d_duffing_posteriori}
\end{figure}

\begin{table}[tbhp]
{\footnotesize
\caption{Comparison of MLE and correlation dimension between truth and model}\label{tab:df_map_lya_cd}
\begin{center}
\begin{tabular}{|c|c|c|} \hline
case           & MLE  & $\gamma$  \\ \hline
true train     & 0.98 & 1.12  \\
model train    & 0.97 & 1.12 \\ \hline
true test      & 0.97 & 1.13  \\
model test     & 0.97 & 1.13  \\ \hline
\end{tabular}
\end{center}
} 
\end{table}


\subsection{Lorenz system}
\subsubsection{Problem description}
The corresponding first order forward discretized Lorenz system is given as:
\begin{equation}
\begin{bmatrix}
x_{1}^{n+1} \\ x_{2}^{n+1} \\ x_{3}^{n+1}
\end{bmatrix} = 
\begin{bmatrix}
x_{1}^{n} + \Delta t \sigma(x_{2}^{n}-x_{1}^{n})\\
x_{2}^{n} + \Delta t (x_{1}^{n}(\rho - x_{3}^{n}) -x_{2}^{n})\\
x_{3}^{n} + \Delta t (x_{1}^{n}x_{2}^{n}-\beta x_{3}^{n}),
\end{bmatrix}
\end{equation}
with $x_1(0)=0.5$, $x_2(0)=x_3(0)=0$.
Parameters for each case are shown in \cref{tab:lsys} where the only difference is $\rho$.
\begin{table}[tbhp]
{\footnotesize
\caption{Parameters of Lorenz system for chaotic and nonchaotic cases}\label{tab:lsys}
\begin{center}
\begin{tabular}{|c|c|c|c|} \hline
case & \bf $\sigma$ & $\beta$ & $\rho$ \\ \hline
non-chaotic & 10 & 8/3 & 15 \\
chaotic     & 10 & 8/3 & 35 \\ \hline
\end{tabular}
\end{center}
} 
\end{table}

For the non-chaotic case, the simulation time is $t = [0,20]$ with 8000 snapshots; and for the chaotic case, the simulation time is $t = [0, 400]$ with 40000 snapshots. The snapshots are equally split between training and testing sets.

For the Lorenz system with $\hat{x} = x_1$, $\delta = \sigma x_2$, one can find the analytical closure for $\delta$ with $p=1$ after some algebra: 
\begin{equation}{\label{eq:analytic_form_lorenz}}
\delta^{n+1} = (1 - \Delta t)\delta^{n} + \sigma \Delta t \hat{x}^{n} \left( \left( 1-\beta \Delta t\right )\left (\dfrac{\delta^{n} + (\Delta t - 1)\delta^{n-1}}{\sigma \hat{x}^{n-1} \Delta t} \right) - \dfrac{\hat{x}^{n-1}\delta^{n-1}}{\sigma} + \rho \beta \Delta t \right),
\end{equation}
which clearly involves \emph{cross time} features and \emph{rational} forms instead of pure polynomial forms. 

The corresponding ROM formulation is given as
\begin{align}
\hat{x}^{n+1} &= \hat{x}^{n} - \Delta t \sigma \hat{x}^{n} +  \Delta t \delta^n, \\
\delta^{n+1} &= \delta^{n} + \Delta t \bm{G}(\hat{x}^{n}, \hat{x}^{n-1}, \delta^{n}),
\end{align}
where $\bm{G}$ is modeled by a neural network.

Standard polynomial regression is found to be unsuitable to extract governing equations in this case. A recently developed method called implicit-SINDy~\cite{mangan2016inferring}, which can account for non-rational functions could perhaps improve predictions. Alternatively, we employ an artificial neural network model with $p=1$ and consider full memory interaction between different time instances. The architecture of the neural network is chosen as 4-16-16-1 for both chaotic and non-chaotic cases with $p=1$ and \emph{tanh} as the activation function. The neural network model is trained for 16000 epochs with the Adam optimizer with a learning rate of 0.0001, a mini-batch size of 256 and the last 10\% of training data is split as validation set to monitor generalization.

\subsubsection{A posteriori evaluation of model performance}

For the non-chaotic case, the model performs well for both training and testing data, as shown in \cref{fig:ann_3d_lorenz_nchaos_posteriori}.

\begin{figure}[H]
	\centering
	\includegraphics[scale=0.15]{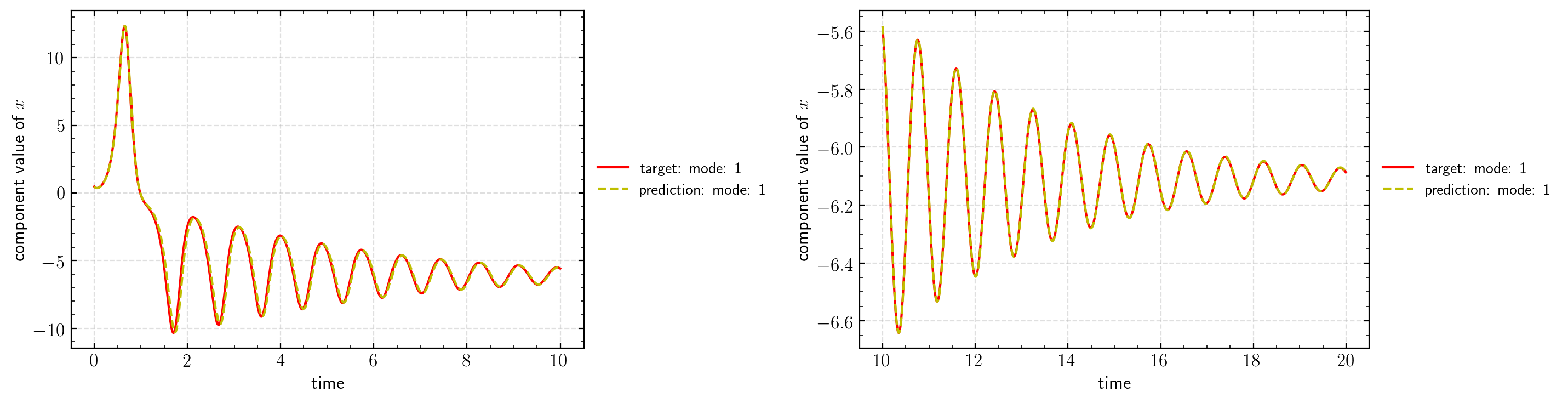}
	\caption{A posteriori model performance on the non-chaotic Lorenz system. Left: training data. Right: testing data.}
	\label{fig:ann_3d_lorenz_nchaos_posteriori}
\end{figure}

For the chaotic case, results are shown in \cref{fig:ann_3d_lorenz_chaos_posteriori} for training and testing evaluations.  \cref{tab:lorenz_lya_cd} shows that both MLE and correlation dimension are in accordance with the truth. Furthermore, Diks criterion shows $|S| = 1.509$ for training data and $|S| = 2.83$ for testing data, which implies that the null hypothesis is accepted. Details of implementation are provided in \cref{app:diks_criterion}. 

\begin{table}[tbhp]
{\footnotesize
\caption{Comparison of MLE and correlation dimension between truth and model}\label{tab:lorenz_lya_cd}
\begin{center}
\begin{tabular}{|c|c|c|} \hline
case          & MLE     & $\gamma$  \\ \hline
true train    & 0.044   & 1.34      \\
model train   & 0.042   & 1.33      \\ \hline
true test     & 0.041   & 1.34      \\
model test    & 0.041   & 1.34      \\ \hline
\end{tabular}
\end{center}
} 
\end{table}

\begin{figure}[H]
	\centering
	\includegraphics[scale=0.15]{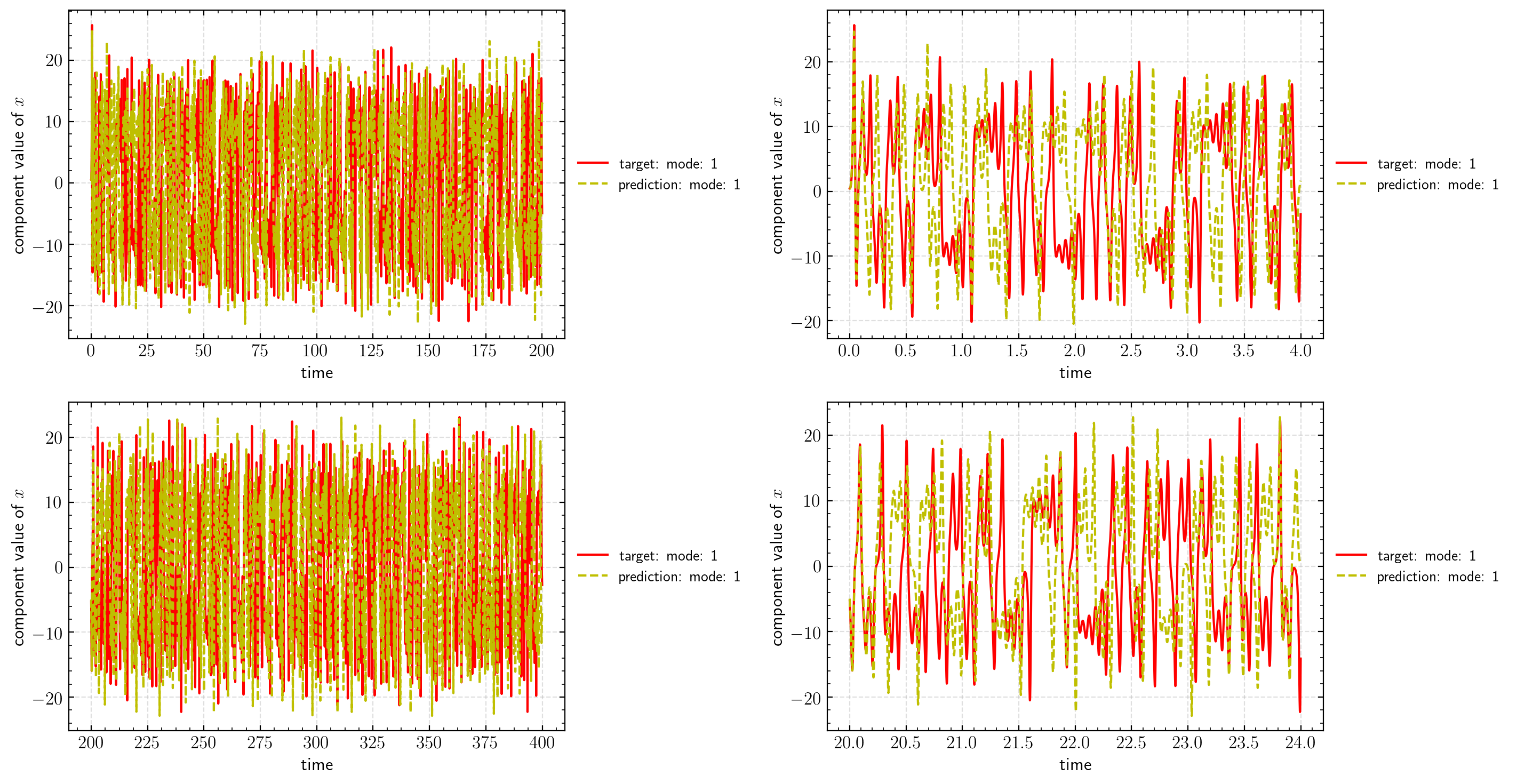}
	\caption{A posteriori model performance on the chaotic Lorenz system. Top left: training data. Bottom left: testing data. Top right: training data zoomed in $t \in [0,4]$. Bottom right: testing data zoomed in $t \in [20, 24]$}
	\label{fig:ann_3d_lorenz_chaos_posteriori}
\end{figure}

\subsection{One dimensional viscous Burgers equation}
\subsubsection{Problem description}
In this section, the one dimensional viscous Burgers equation is considered in a periodic domain  $x \in [0, 2\pi]$ 

\begin{equation}{\label{eq:vbe}}
\frac{\partial  u}{\partial t} + u \frac{\partial u }{\partial x} = \nu \frac{\partial^2 u}{\partial x^2}
\end{equation}
 with $\nu=0.02$, $t\in[0, 10]$, $u(x,0)=\sin(x)$.

Using a standard pseudo-spectral method with two-thirds dealiasing and with Runge-Kutta 3rd order SSP scheme for time stepping, the system is resolved with 1024 grid points uniformly distributed in space, and a time step $\Delta t = 0.01 \Delta x$. 2000 snapshots of $u(x,t)$ are uniformly collected in time. For the setup of coarse graining, we use a spectral filter to obtain the state and corresponding closure with wavenumber $k$ ranging from $-6 $ to $ 5$. The corresponding equation in spectral form for $k^{th}$ wavenumber or mode is 
\begin{align}{\label{eq:vbe_spectral}}
\frac{d \hat{u}_k}{d t} &= -\nu k^2 \hat{u}_k -  \frac{ik}{2}\sum_{p+q=k}\hat{u}_p \hat{u}_q = -\nu k^2 \hat{u}_k - \frac{ik}{2}\sum_{p+q=k,p\in F,q\in F} \hat{u}_p \hat{u}_q \\ \nonumber &- \textcolor{black}{ \frac{ik}{2}\sum_{p+q=k, p\in F, q\in G} \hat{u}_p \hat{u}_q -\frac{ik}{2}\sum_{p+q=k, p\in G, q\in F} \hat{u}_p \hat{u}_q - \frac{ik}{2}\sum_{p+q=k, p\in G, q\in G} \hat{u}_p \hat{u}_q}  ,
\end{align}
where $u(x,0) = \sin(x)$ ; $x \in [0, 2\pi]$; $k \in \{ -N/2, \ldots, N/2-1 \}$; index set of resolved modes $F = \{ -Q/2, \ldots, Q/2-1 \}$; index set of unresolved modes: $G = \{ -N/2, \ldots, -Q/2-1, Q/2 \ldots, N/2-1 \}$. The closure is the sum of last three terms in \cref{eq:vbe_spectral}, and noticing there is a symmetry in the solution with sine wave initial condition, a truncation corresponding to $Q=6$ is 
considered. Only the imaginary part of $\hat{u}_k$ with $k$ ranges from $-6$ to $-1$ is considered. 
For the evaluation of the closure model, we consider $ 0 \le t \le 4 $  as our training data and $ 4 < t \le 10 $ as testing data.

\subsubsection{Model selection}

For the polynomial model, the optimal time delay $p$ and polynomial order $k$ is chosen by sweeping  $p$ from 0 to 2. For each $p$, the optimal $k$ and corresponding $\lambda$ is extracted.

 For the application of the ANN, the best model is selected from a range of hyperparameters with $p$ ranging from 0 to 2. Two hidden layers are used with identical numbers of hidden units for each layer as 4, 8, 12, 16 and type of activation as ReLU, SeLU, tanh. The optimal model was chosen as that which yields the most satisfactory validation result with the smallest number of parameters. We found this to be $p=2$, with 12 hidden units and the tanh activation function. The type of activation function does not appear to be critical in this case, which may be a consequence of the fact that it is not a deep neural network where the vanishing gradient problem may be significant \cite{goodfellow2016deep}.

\subsubsection{A posteriori evaluation of model performance}

As seen in \cref{fig:sindy_6d_vde_posteriori_train,fig:ann_6d_vde_posteriori}, both SINDy and ANN perform well on the training data. When evaluated against unseen testing data, however, performance of SINDy was seen to deteriorate as displayed in \cref{fig:sindy_6d_vbe_posteriori_test}
as a consequence of the extrapolation going out of bounds with high order polynomial features. The ANN is a convergent series of infinite polynomials, and therefore the corresponding model remains less unbounded compared to the polynomial model as shown in \cref{fig:ann_6d_vde_posteriori}.


\begin{figure}[H]
	\centering
	\includegraphics[scale=0.35]{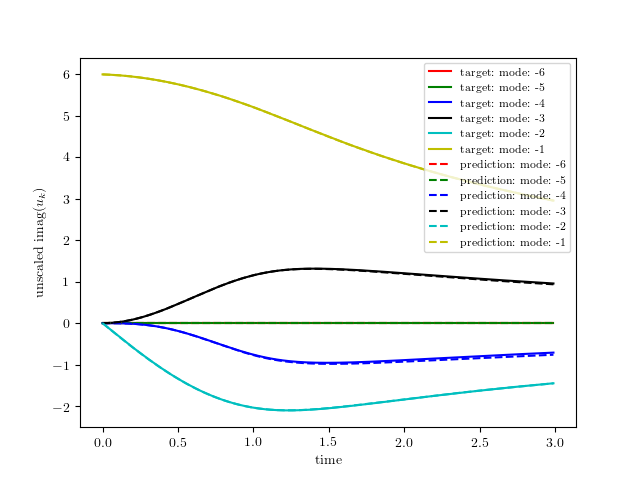}
	\caption{A posteriori model performance on training data : 1D VBE using polynomial closures.}
	\label{fig:sindy_6d_vde_posteriori_train}
\end{figure}
\begin{figure}[H]
	\centering
	\includegraphics[scale=0.35]{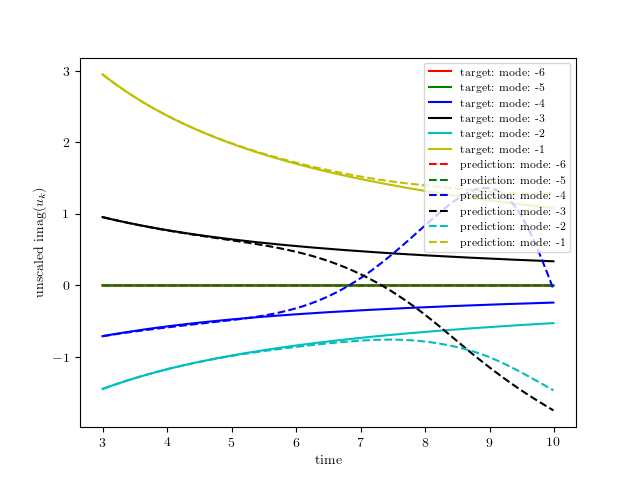}
	\caption{A posteriori model performance on testing data : 1D VBE using polynomial closures.}
	\label{fig:sindy_6d_vbe_posteriori_test}
\end{figure}


\begin{figure}[H]
	\centering
	\includegraphics[scale=0.15]{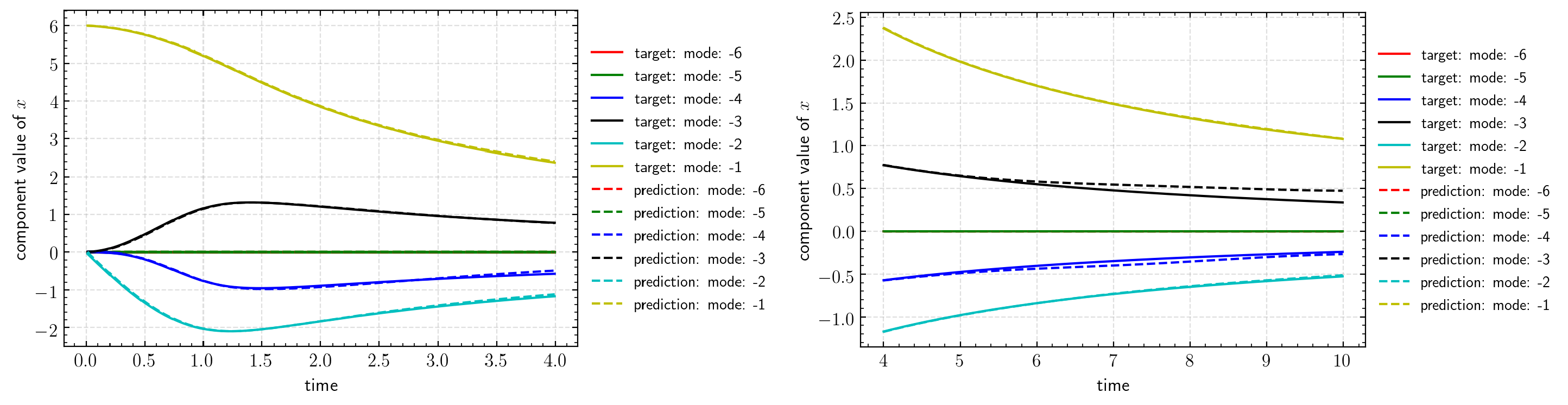}
	\caption{A posteriori model performance on 1D VBE  with ANN. Left: training data. Right: testing data.}
	\label{fig:ann_6d_vde_posteriori}
\end{figure}

Comparison of the results on unseen testing data with ANN at snapshots $t=4.5$, $t=6.0$, $t=7.5$ and $t=9.0$ between the data-driven model, no closure, and ground truth in physical space is shown in \cref{fig:ann_6d_vde_posteriori_physical}. The results highlight the importance of the closure in predicting the future state of this system. The ANN model performs particularly well between $t\in [4, 6]$, with a slight degradation in performance at later times. 

\begin{figure}[H]
	\centering
	\includegraphics[scale=0.15]{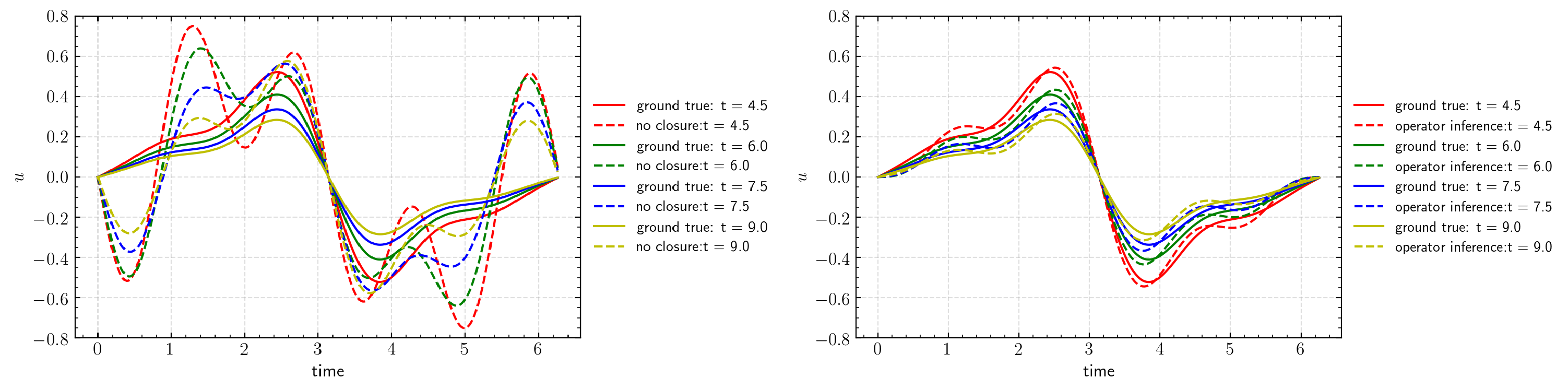}
	\caption{A posteriori model performance on 1D VBE with ANN. Left: without closure. Right: with operator inference closure.}
	\label{fig:ann_6d_vde_posteriori_physical}
\end{figure}

\section{Conclusion}{\label{sec:conclusion}}

An operator inference framework was presented, with the goal of developing closures for reduced models of dynamical systems\footnote{Sample code available at: \url{https://github.com/pswpswpsw/siads_data_driven_closure.git}}. Dynamic memory is embedded into the  equations and the evolution of this term is parametrized via polynomial features and artificial neural networks (ANN). The polynomial model is determined using non-linear regression and \emph{lasso} with Pareto-front-based model selection. The ANN model is determined using gradient-based methods with weight decay regularization. By assuming that different time instances are decoupled from each other, the exponential growth of the number of parameters is limited to a linear growth. For special types of non-linear systems, the closure dynamics was proven to have a compact memory, and the form of the closure is shown to be precisely discoverable using a sparse  set of features. Numerical evaluations of the model on non-chaotic and chaotic dynamical systems are used to evaluate the viability of the procedure, with an emphasis on model selection and a posteriori prediction of unseen data.


\appendix

\section*{Acknowledgements}
This work was supported by AFOSR and AFRL under grants FA9550-16-1-0309 \& FA9550-17-1-0195. The authors thank Mr. Sven Giorno for numerical experimentation, Prof. Cees Diks for discussion on implementing the Diks criterion, and Ms. Helen Zhang for comments on the manuscript. 

\section{Comparison with Elman's recurrent neural network}{\label{app:compare_elman}}

Elman's network\cite{elman1990finding} is one of the earliest~\cite{trebaticky2009prediction} recurrent neural network models,  and was originally proposed to represent temporal structure in linguistics. Although Elman's network is similar to a standard feedforward neural network (FNN), the key difference is that its input includes an additional feedback, and thus the memory effect is addressed in a lossy sense \cite{goodfellow2016deep} using one previous step. 

In this section, we will highlight similarities and differences between the operator inference framework for closure modeling \cref{eq:general_model_1,eq:general_model_2} and Elman's model. Given a general predictive task for a discrete dynamical system: $\{x_i\}_{i=1,\ldots}$, $x_i \in \mathbb{R}^{Q}$, $i \in \mathbb{N}^{+}$, $Q\in\mathbb{N}^{+}$, Elman's network is:
\begin{equation}{\label{eq:elman_1}}
x_{i+1} = \mathcal{C}(h_{i+1}),
\end{equation}
\begin{equation}{\label{eq:elman_2}}
h_{i+1} = \mathcal{H}(h_{i}, x_i),
\end{equation} 
where $\mathcal{H}(\cdot)$ and $\mathcal{C}(\cdot)$ are perceptrons and $h_i \in \mathbb{R}^H$, $H \in \mathbb{N}^{+}$ is the number of units in the context layer. On the other hand, if one considers a simplified discrete case of \cref{eq:general_model_1,eq:general_model_2} with $p=0$, following the same notation, one has:
\begin{equation}{\label{eq:elman_ours_1}}
x_{i+1} = f(x_{i}) + h_{i},
\end{equation}
\begin{equation}{\label{eq:elman_ours_2}}
h_{i+1} = G( h_{i},x_{i}),
\end{equation} 
where $f: \mathbb{R}^Q \mapsto \mathbb{R}^Q$ is known, while $G: \mathbb{R}^Q \times \mathbb{R}^Q \mapsto \mathbb{R}^Q$ is unknown. The similarity is that both \cref{eq:elman_2,eq:elman_ours_2} address the memory effect and extract the dynamics in the same fashion. However, there are at least three different aspects: 
\begin{itemize}
\item Elman's network assumes output dependence only on newly activated hidden units $h_{i+1}$, while our model at $p=0$ considers output dependence on previously activated hidden units $h_{I}$, together with the current input $x_{I}$. Our model also extends the case to $p > 0$, 

\item Our model decouples the evolution processes of hidden units and states while Elman's is formulated in a sequential fashion,

\item  Elman's network requires the determination of all relationships, i.e., the perceptrons, in a purely data-driven fashion, whereas the structure of state evolution is considered known in our operator inference framework. 
\end{itemize}

\section{Implementation of Diks criterion}{\label{app:diks_criterion}}

Diks et al.~\cite{diks1996detecting} developed a test that evaluates whether two attractors are similar. Diks criterion follows statistical inference and can provide probabilistic confidence bounds. In our work, this criterion is used to compare the reconstructed dynamics of an attractor with the ground truth. The method is based on testing a null hypothesis: \emph{two sets of delay vectors are drawn from the same multidimensional probability distribution}. It was later employed by Bakker as a monitoring metric \emph{during} the training  of ANNs for time series modeling. The time series is divided into segments of length $l$ and averaged. To cope with the fractal probability distribution of the chaotic attractor, smoothing is performed via a Gaussian kernel. A bandwidth $d$ is determined by performing sweeps on another trajectory and choosing the $d$ that reveals the highest discrepancy between the two time series. Other hyperparameters are the embedding dimension $m$, and delay time $\tau$. $\tau$ is chosen as the first local minimum of mutual information of Fraser, and $m$ is simply chosen as 2 for the Duffing map and 3 for the Lorenz system. 

Given two sets $\{\bm{X}_i\}_{i=1}^{N_1}$ and $\{\bm{Y}_i\}_{i=1}^{N_2}$ and realizations $\{\bm{x}_i\}_{i=1}^{N_1}$ and $\{\bm{y}_i\}_{i=1}^{N_2}$, the square root of $Q$ defines a distance between the two probability distribution of delay vectors. $\hat{Q}$ is an unbiased estimator of $Q$ and given by:
\begin{equation}{\label{eq:diks_Q}}
\displaystyle \hat{Q} = \frac{1}{\binom
{N_1}{2}} 
{\sum}_{1 \le i < j \le N_1} h(\bm{X}_i,\bm{X}_j) + \frac{1}{\binom{N_2}{2}} \sum_{1 \le i < j \le N_2} h(\bm{Y}_i,\bm{Y}_j) - 
\frac{2}{N_{1} N_2} \sum_{i=1}^{N_1} \sum_{j=1}^{N_2} h(\bm{X}_i,\bm{Y}_j).
\end{equation}

The variance of $\hat{Q}$ under a null hypothesis and conditionally on the set of $N= N_1+N_2$ observed vectors is given by:
\begin{equation}{\label{eq:diks_var_q}}
V_c(\hat{Q}) = \frac{2(N-1)^2(N-2)}{N_1(N_1-1)N_2(N_2-1)(N-3)} \frac{1}{\binom N2}\sum_{1\le i < j \le N}\phi^{2}_{ij},
\end{equation}
where 
$$\phi_{ij} = H_{ij} -g_i -g_j,$$
$$
h(\bm{s}, \bm{t}) = e^{-\lvert \bm{s} - \bm{t} \rvert/4d^2},
$$
and $$H_{ij}=h(\bm{z}_i, \bm{z}_j) - \frac{1}{\binom N2} \sum_{1 \le i < j \le N} h(\bm{z}_i, \bm{z}_j),$$
and $g_i = \frac{1}{N-2}\sum_{j, j\neq i} H_{ij}$,
where $\bm{z}_i$ is defined as 
\[
  \bm{z}_i = 
  \begin{cases}
    \bm{x}_i, & \text{for } 1 \leq i \leq N_1 \\
    \bm{y}_{i-N_1}, & \text{for } N_1 < n \leq N   
    \end{cases}.
\]

Note that $S = \hat{Q}/V_c(\hat{Q})$ is a random variable with zero mean and unit standard derivation under the null hypothesis. As suggested by Diks, we reject the null hypothesis with more than 95\% confidence for $S > 3$. 

In this work, for the Duffing map, the optimal $d=0.0001$, $l$ is chosen as 100, $\tau$ is chosen as 20. For the Lorenz system, the optimal $d=0.001$ and $l$ is chosen as 100, $\tau$ is chosen as 25.

\bibliographystyle{siamplain}
\bibliography{references}

\end{document}


\maketitle

\section{A detailed example}

Here we include some equations and theorem-like environments to show
how these are labeled in a supplement and can be referenced from the
main text.
Consider the following equation:
\begin{equation}
  \label{eq:suppa}
  a^2 + b^2 = c^2.
\end{equation}
You can also reference equations such as \cref{eq:matrices,eq:bb} 
from the main article in this supplement.

\lipsum[100-101]

\begin{theorem}
  An example theorem.
\end{theorem}

\lipsum[102]
 
\begin{lemma}
  An example lemma.
\end{lemma}

\lipsum[103-105]

Here is an example citation: \cite{KoMa14}.

\section[Proof of Thm]{Proof of \cref{thm:bigthm}}
\label{sec:proof}

\lipsum[106-112]

\section{Additional experimental results}
\Cref{tab:foo} shows additional
supporting evidence. 

\begin{table}[htbp]
{\footnotesize
  \caption{Example table}  \label{tab:foo}
\begin{center}
  \begin{tabular}{|c|c|c|} \hline
   Species & \bf Mean & \bf Std.~Dev. \\ \hline
    1 & 3.4 & 1.2 \\
    2 & 5.4 & 0.6 \\ \hline
  \end{tabular}
\end{center}
}
\end{table}

\bibliographystyle{siamplain}
\bibliography{references}